\documentclass[a4paper,11pt,pdf]{amsart}
\usepackage{enumerate, amsmath, amsfonts, amssymb, amsthm, thmtools, wasysym, graphics, graphicx, xcolor, frcursive,comment,bbm}
\usepackage[colorlinks=true,citecolor=cyan,backref=page]{hyperref}
\usepackage{tikz}
\usepackage{rotating}
\usepackage{etex}
\usepackage{lscape}
\usepackage{tikz-cd}

\makeatletter
\newcommand{\thickhline}{%
	\noalign {\ifnum 0=`}\fi \hrule height 1pt
	\futurelet \reserved@a \@xhline
}

\definecolor{darkblue}{rgb}{0.0,0,0.7} 
 
\definecolor{darkred}{rgb}{0.7,0,0} 
\usepackage{hyperref}
\usepackage[all]{xy}
\usepackage[T1]{fontenc}
\usepackage{adjustbox}
\usepackage{enumitem}
\usepackage{vmargin}            
\setmarginsrb{3cm}{2.5cm}{3cm}{2.5cm}{0cm}{0.6cm}{0cm}{0cm}

\usepackage{caption,lipsum}
\usepackage{graphicx}                  
\usepackage{pstricks,pst-plot,pst-text,pst-tree,pst-eps,pst-fill,pst-node,pst-math}
\usepackage{setspace}
\usepackage{multicol}

\newcommand{\darkred}{\color{darkred}} 
\newcommand{\defn}[1]{\emph{\darkred #1}}

\usepackage{etex}

\newtheorem{theorem}{Theorem}[section]
\newtheorem{prop}[theorem]{Proposition}
\newtheorem{lemma}[theorem]{Lemma}
\newtheorem{cor}[theorem]{Corollary}

\theoremstyle{definition}
\newtheorem{definition}[theorem]{Definition}
\newtheorem{rmq}[theorem]{Remark}
\newtheorem{exple}[theorem]{Example}

\numberwithin{equation}{section}

\title[Interval Garside groups arising from involutions in reflection groups]{Interval Garside groups arising from involutions in finite reflection groups}

\author{Eirini Chavli}
\address{Université de Tours, IDP, CNRS UMR 7013, Faculté des Sciences et Techniques, Université de Tours, Parc de Grandmont, 
	37200 Tours, France}
\author{Thomas Gobet}
\address{Université Clermont Auvergne, LMBP, CNRS UMR 6620, Campus des Cézeaux, 3 place Vasarely, TSA 60026, CS 60026, 63178 Aubière cedex, France}

\begin{document}
	\maketitle
	
	\begin{abstract}
		We identify and study the interval Garside groups arising from the restriction of the absolute order on a Coxeter group to a lattice $[1,w]_T$, where $w$ is an involution. Those involutions $w$ for which $[1,w]_T$ is a lattice were previously classified by the second author; every such involution lies in the center of the parabolic subgroup generated by $[1,w]_T$. Except in type $B_n$, the obtained groups are isomorphic to (decomposable) right-angled Artin groups. We also investigate the situation for some finite complex reflection groups, mostly in rank two, taking for $w$ a (not necessarily involutive) central element. 
	\end{abstract}
	
	\tableofcontents
	
	\section{Introduction}
	
	Garside groups were introduced by Dehornoy and Paris~\cite{DP} as groups of fractions of monoids with good divisibility properties, generalizing the properties exhibited by Garside for the $n$-strand braid group~\cite{Garside_69}. They provide a framework to solve most combinatorial questions on infinite groups resembling braid groups (such as the word and conjugacy problems, the determination of the center, the absence of torsion, etc.).  
	
	The aim of this paper is to study certain Garside groups arising from finite reflection groups, using the machinery of so-called \textit{interval groups} (see Section~\ref{sec:pre} below for precise definitions), in the spirit of the dual Garside structures introduced by Bessis for Artin groups of spherical type~\cite{Bessis}. This framework allows one to realize the classical and dual Artin groups of spherical type as the interval Garside groups attached respectively to the interval between $1$ and $w_0$ in the weak order on the attached Coxeter group $W$, and the interval between $1$ and $c$ in the absolute order on $W$; here $w_0$ stands for the longest element in $W$, while $c$ stands for a Coxeter element. The fact that these intervals are lattices, together with a special property of the top elements, guarantees that the corresponding interval groups are Garside groups. 
	
	On one hand, showing that a given group $G$ is a Garside group is a hard task in general. On the other hand, the theory of interval groups gives a construction allowing one to produce families of Garside groups from known groups. More precisely, given a (finite) group $H$ and a system $S$ of elements generating $H$ as a monoid, one can endow $H$ with a natural partial order $\leq_S$ depending on $S$, and consider intervals $[1, w]_S$ in this partial order; here $w\in H$ is a particular element of $H$ (called \textit{balanced}). If this interval is a lattice, then it produces a Garside group $G([1,w]_S)$, of which $H$ is a quotient.  
	
	When considering the absolute order on a finite Coxeter group $W$, obtained using the set of generators of $W$ given by the whole set $T$ of reflections, every element $w$ is automatically balanced. It is thus natural to identify those elements $w\in W$ for which $[1, w]_T$ is a lattice, and to study the attached Garside groups. As recalled above, if $w$ is a Coxeter element, then $G([1,w]_T)$ is the dual Artin group attached to $W$, which is isomorphic to the usual Artin group. One advantage when considering $T$ as set of generators is that every element $u\in W$ is automatically balanced. It thus suffices to show that $[1, u]_T$ is a lattice to produce a Garside group.   
	
	In a previous work, the second author classified those involutions $u$ in Coxeter groups for which $[1, u]_T$ is a lattice: 
	
	\begin{theorem}[{\cite[Corollary~4.3]{Gobet_invol}}]\label{thm_gob_class}
		Let $W$ be a Coxeter group and $u\in W$ and involution. Let $P(u)$ be its parabolic closure, which is finite. Then $[1, u]_T$ is a lattice if and only if every irreducible component of $P(u)$ is of one of the following types: $$A_1, I_2(2k)\text{~with~}k\geq 2, B_n~\text{with~}n\geq 3, D_4, \text{~or~}H_3.$$
	\end{theorem}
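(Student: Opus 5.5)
Since $[1,u]_T$ only involves elements of $P(u)$, I first reduce to the parabolic closure. The key fact is that for $x\in W$, $x\leq_T u$ forces $x$ to lie in every parabolic subgroup containing $u$. Hence $[1,u]_T$ in $W$ equals $[1,u]_{T'}$ computed in $W'=P(u)$ with $T'=T\cap P(u)$. The reflection length and the absolute order are also intrinsic to $W'$, because reflection length is preserved under passing to parabolic subgroups (Dyer).

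Next I split $W'=W_1\times\cdots\times W_k$ into irreducible components. Write $u=u_1\cdots u_k$ with $u_i\in W_i$. Since $T'$ is the disjoint union of the reflection sets of the $W_i$ and the length function is additive, the interval decomposes as
$$[1,u]_{T'}\cong\prod_i [1,u_i]_{T_i}.$$
Each $u_i$ is an involution whose parabolic closure is all of $W_i$. A product of bounded posets is a lattice if and only if each factor is. So it remains to decide, for an irreducible finite Coxeter group $W$ and an involution $u$ with $P(u)=W$, when $[1,u]_T$ is a lattice.

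For this I use the structure of involutions. An involution $u$ acts on $V$ with $-1$-eigenspace $\mathrm{Mov}(u)$. By Carter's lemma, $u$ is a product of $\ell_T(u)=\dim\mathrm{Mov}(u)$ pairwise orthogonal reflections. The condition $P(u)=W$ means $\mathrm{Mov}(u)=V$, so $u=-1=w_0$ is central. This forces $W$ to be one of $A_1$, $B_n$, $D_{2m}$, $E_7$, $E_8$, $F_4$, $H_3$, $H_4$, $I_2(2k)$. In this setting $x\leq_T w_0$ holds for every $x\in W$, since $\ell_T(x)+\ell_T(x^{-1}w_0)=n$ follows from $\mathrm{Mov}(x)\oplus\mathrm{Mov}(-x)=V$ when $w_0=-1$. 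Therefore $[1,w_0]_T$ is the whole group $W$ with the absolute order, and the question becomes: for which of these types is $(W,\leq_T)$ a lattice?

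I then treat the types case by case.
\begin{itemize}
\item $A_1$ and $I_2(2k)$: the poset has rank $2$, with all reflections as atoms, so it is trivially a lattice.
\item $B_n$: I would use the combinatorial model of signed permutations and the description of $\leq_T$ via cycle structure (noncrossing-type refinements of signed cycles) to show that joins exist.
\item $D_4$ and $H_3$: I would verify the lattice property by a direct check, either computationally or via the rank-$2$ and rank-$3$ structure.
\item Failures in $D_{2m}$ with $m\geq3$, $E_7$, $E_8$, $F_4$ and $H_4$: I would exhibit a bowtie, that is, two reflections $t,t'$ lying below two distinct incomparable rank-$2$ elements. Such configurations come from pairs of orthogonal reflections whose span lies in two different reflection subgroups of type $A_1\times A_1$ or $B_2$. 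For $D_{2m}$ I would embed a bad configuration from $D_6$, or from a parabolic $D_4\times D_2$ arrangement, noting that intervals below elements of parabolic subgroups are preserved. For $E$, $F$ and $H_4$ I would find bowties via sub-root-systems.
\end{itemize}

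The main obstacle is the uniform lattice proof for $B_n$, together with locating explicit bowties in the exceptional non-lattice types. For $B_n$, I would first try to identify $[1,w_0]_T$ with a known lattice of type-$B$ partitions, or prove the lattice property by induction on $n$ using the parabolic $B_{n-1}$.
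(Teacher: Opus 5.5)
Your reduction to $W=P(u)$ with $u=w_0=-1$ central is fine. The next step, however, is false, and everything after it addresses the wrong poset. It is not true that $x\le_T w_0$ for every $x\in W$, because the decomposition $\mathrm{Mov}(x)\oplus\mathrm{Mov}(-x)=V$ holds only when $x^2=1$. Indeed $\mathrm{Mov}(x)=(V^{x})^{\perp}$ and $\mathrm{Mov}(-x)=(V^{-x})^{\perp}$, where $V^{-x}$ is the $(-1)$-eigenspace of $x$. Hence $\ell_T(x)+\ell_T(x^{-1}w_0)=2n-\dim V^{x}-\dim V^{-x}$, which exceeds $n$ as soon as $x$ has an eigenvalue other than $\pm1$. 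For example, a Coxeter element $c$ has $\ell_T(c)=n=\ell_T(w_0)$ and $c\neq w_0$ for $n\ge 2$, so $c\not\le_T w_0$. In $I_2(2k)$ the interval is $T\cup\{1,w_0\}$, not the whole group; the whole group is not even bounded above under $\le_T$, since all $2k-1$ nontrivial rotations are maximal. What is true, as recorded in Proposition~\ref{prop:invols}, is that $[1,w_0]_T$ is exactly the set of involutions of $W$. A prefix of a $T$-reduced word of an involution is a product of pairwise commuting reflections, hence an involution; conversely, your dimension count is valid precisely for involutions. So the question to settle is whether the involutions of $W$ form a lattice under $\le_T$, not whether $(W,\le_T)$ does. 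Your dihedral case only comes out right because you implicitly used the correct interval.

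Even with the correct poset, the case analysis is only announced, and the failure mechanism you propose cannot occur. If $x$ is an involution with $\ell_T(x)=2$, the reflections below $x$ are exactly those of the rank-$2$ parabolic subgroup $P(x)$ (Remark~\ref{rmq:parab_clos}), and $x$ is its central longest element. Two distinct reflections lie in a unique rank-$2$ parabolic subgroup, so they have at most one upper bound of length $2$; there are never bowties between ranks $1$ and $2$. The obstructions live higher up. For instance, in $D_{2m}$ with $m\ge 3$, use the signed-permutation notation of Section~\ref{sub_bn}. Any common upper bound of $[1][2]=t_{1,2}r_{1,2}$ and $t_{1,3}$ must have $\{1,2,3\}\subseteq A$, while lying in $D_{2m}$ forces $|A|$ to be even. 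Hence the elements $[1][2][3][j]$, $4\le j\le 2m$, are distinct minimal upper bounds of length $4$. For $D_4$ only $j=4$ remains, consistent with the statement. Similarly, for $B_n$ the relevant combinatorics is not that of noncrossing signed partitions, which describes $[1,c]_T$. It is the description of involutions through the set $A$ of sign-changed indices together with the type~I and type~II pairs. The actual lattice argument for $B_n$, the verifications for $D_4$ and $H_3$, and explicit obstructions for $E_7$, $E_8$, $F_4$, $H_4$ are all still missing.
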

	
	The aim of this paper is to identify the attached Garside groups (note that for $A_1$ it is trivially $\mathbb{Z}$, and the Garside structure coincides here with both the classical and dual ones on the braid group on two strands). To this end, it suffices to deal with the case where $P(u)$ is irreducible, as in the general case the group will be the direct product of those attached to its irreducible components. We recall some observations from~\cite{Gobet_invol} which facilitate this study. The poset $[1, u]_T$ fully lies inside $P(u)$, as $T$-reduced expressions of an element in a parabolic subgroup have all their factors in this parabolic. We can thus assume that $W=P(u)$, which forces $u=w_0$ (see~\cite[Corollary 3.5]{Gobet_invol}). 
	
	After collecting some prerequisites in Section~\ref{sec:pre}, the identification of the attached Garside groups is achieved in Section~\ref{sec:id} below; in every type except in type $B_n$, the obtained Garside groups are given by decomposable right-angled Artin groups. More precisely our main result is the following:
	
	\begin{theorem}\label{thm:main}
		Let $w_0$ be the longest element in one of the finite Coxeter groups $W$ listed in Theorem~\ref{thm_gob_class}. The attached interval Garside group $G([1,w_0]_T)$ is isomorphic to
		\begin{enumerate}
			\item $\mathbb{Z}\times F_k$ if $W$ is of type $I_2(2k)$, $k \geq 2$, where $F_k$ is the free group on $k$ generators, 
			\item $\mathbb{Z} \times G_n$ if $W$ is of type $B_n$, where $G_n$ is the group defined by the presentation in Proposition~\ref{prop_bn_clean} below (see also Example~\ref{ex_bn} for the case $n=3$), 
			\item $\mathbb{Z} \times \mathrm{RAAG}(\triangle ~ \triangle ~ \triangle)$ if $W$ is of type $D_4$, 
			\item $\mathbb{Z} \times \mathrm{RAAG}( ~| ~| ~ | ~ | ~ |~)$ if $W$ is of type $H_3$.
		\end{enumerate}
		Here, in the last two cases, $\mathrm{RAAG}(\mathsf{Gr})$ denotes the right-angled Artin group with graph $\mathsf{Gr}$, with the convention on the graph that an edge denotes a pair of commuting generators. 
	\end{theorem}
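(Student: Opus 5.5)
We work type by type. The starting point is the standard presentation of an interval group: $G([1,w_0]_T)$ is generated by a copy $\mathbf{x}$ of each $x\in[1,w_0]_T$, with relations $\mathbf{x}\mathbf{y}=\mathbf{z}$ whenever $xy=z$ and $\ell_T(x)+\ell_T(y)=\ell_T(z)$, with $z\in[1,w_0]_T$. Since every element is balanced here, it is enough to describe the poset. Since $w_0=-1$ acts on $V$, every element $x\le_T w_0$ satisfies $\mathrm{Fix}(x)\oplus\mathrm{Fix}(x^{-1}w_0)=V$. In the cases at hand, $x^{-1}w_0$ is then the $-1$ on a complementary subspace. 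So the poset is governed by orthogonal decompositions of $V$ into subspaces on which $W$ contains the element $-1$.

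The first reduction is to use only the atoms (the reflections $t\le_T w_0$) and the rank-two relations. Because $[1,w_0]_T$ is a lattice, the interval group is generated by its atoms. The relations coming from chains in the interval reduce to the rank-two relations $\mathbf{t}\mathbf{t}'=\mathbf{s}\mathbf{s}'$ (a standard reduction for interval groups, via complete reduced-word graphs). Moreover, the element $\Delta=\mathbf{w}_0$ is central because $w_0$ is central in $W$, and this will produce the $\mathbb{Z}$ factor.

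In type $I_2(2k)$, the atoms are the $2k$ reflections, and $t\le_T w_0$ always holds. The relations say that $\mathbf{t}\mathbf{t}^\perp=\Delta=\mathbf{t}^\perp\mathbf{t}$, where $t^\perp$ is the reflection orthogonal to $t$; together with the Hurwitz-type relations, this means each product of an orthogonal pair equals $\Delta$. Eliminating one reflection from each of the $k$ orthogonal pairs gives the presentation $\langle \mathbf{t}_1,\dots,\mathbf{t}_k,\Delta \mid \Delta\text{ central}\rangle$, that is, $\mathbb{Z}\times F_k$.

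In types $D_4$ and $H_3$, we determine the maximal orthogonal frames of reflections, since $w_0=-1$ is the product of the reflections in any orthogonal basis of roots. For $H_3$, these are the $5$ orthogonal triples partitioning the $15$ reflections. Each length-two element below $w_0$ is a product of two orthogonal reflections, and for fixed such $x$ the factorizations of $x$ are exactly the commutation $\mathbf{t}\mathbf{t}'=\mathbf{t}'\mathbf{t}$. One then rewrites the presentation by eliminating $\mathbf{t}_3=\mathbf{t}_2^{-1}\mathbf{t}_1^{-1}\Delta$ in each frame. This yields two generators per frame that commute, five disjoint edges, plus the central $\Delta$. For $D_4$, the $12$ reflections form three orthogonal frames of four each (up to the triality structure). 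Within a frame, the rank-two elements are pairs, and their relations force the three elements $\mathbf{t}_1\mathbf{t}_2$-type generators to commute pairwise. After eliminating, each frame contributes a triangle.

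Type $B_n$ is handled similarly, giving $\mathbb{Z}\times G_n$ by definition of the cleaned presentation of Proposition~\ref{prop_bn_clean}. The main obstacle is the bookkeeping in $D_4$ and $B_n$. There, orthogonal frames overlap, and we must check that the Tietze eliminations exactly produce the claimed graph with no extra relations. I would verify this by listing all rank-two intervals $[1,x]_T$ explicitly, e.g.\ by computer for $D_4$ and $H_3$.
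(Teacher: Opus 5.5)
Your final presentations agree with the paper's, but the step meant to justify them is false. You claim, as a ``standard reduction'', that all relations of the interval group follow from length-two relations $\mathbf{t}\mathbf{t}'=\mathbf{s}\mathbf{s}'$. That holds for dual braid monoids, via Hurwitz transitivity for Coxeter elements, but not for $[1,w_0]_T$. In $D_4$ and $H_3$, every length-two element below $w_0$ has only its two commuting factorizations, so the length-two relations are just commutations inside frames. Distinct frames such as $t_{1,2}r_{1,2}t_{3,4}r_{3,4}$ and $t_{1,3}r_{1,3}t_{2,4}r_{2,4}$ share no letter, so they can never be connected by such relations. Taken literally, your reduction yields $\mathrm{RAAG}(K_4\sqcup K_4\sqcup K_4)$ and the RAAG on five disjoint triangles. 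Their abelianizations are $\mathbb{Z}^{12}$ and $\mathbb{Z}^{15}$, instead of $\mathbb{Z}^{10}$ and $\mathbb{Z}^{11}$. When you then eliminate $\mathbf{t}_3=\mathbf{t}_2^{-1}\mathbf{t}_1^{-1}\Delta$ ``in each frame'', you silently use exactly the top-degree relations (the product of every frame equals $\Delta$) that your reduction discarded. So the argument is internally inconsistent, and your proposed check of listing rank-two intervals by computer would not detect these relations. What is actually required, and what the paper does in its lemmas for $D_4$ and $H_3$, is the following. Use that $T$-reduced expressions of involutions consist of pairwise commuting reflections. Then determine, for every element of the interval of every length including $w_0$ itself, all its reduced expressions up to commutation. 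This shows that lengths $2$ (and $3$ in $D_4$) contribute only commutations, while $w_0$ contributes the identification of all frame products, and that these are all the relations.

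For $B_n$, the reduction to length-two relations is true, but it is a genuine type-specific lemma (Lemma~\ref{lem_rel_2_bn}), not a general fact. The paper shows that every $T$-reduced expression of an involution $u$ is, up to commutation, of the form $[i_1]\cdots[i_p]\,t_{e_1,f_1}r_{e_1,f_1}\cdots t_{e_\ell,f_\ell}r_{e_\ell,f_\ell}\prod_j t_{a_j,b_j}\prod_j r_{c_j,d_j}$, with $\{i_1,\dots,i_p\},\{e_1,f_1\},\dots,\{e_\ell,f_\ell\}$ partitioning $A$. The length-two relation $\underline{[i]}\cdot\underline{[j]}=\underline{t_{i,j}}\cdot\underline{r_{i,j}}$ then bridges all such expressions to the canonical one. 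This bridge is exactly what is missing in $D_4$, where the balanced reflections $[i]$ are not available. Your proposal also omits the Tietze computation that actually produces $G_n$: eliminating $\underline{r_{i,j}}=\underline{[i]}\cdot\underline{[j]}\cdot\underline{t_{i,j}}^{-1}$, and eliminating $\underline{[n]}$ via the central $\Delta$. As a minor point, in $D_4$ the three frames partition the twelve reflections rather than overlapping.
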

	
	Let us point out some observations related to this result:
	\begin{itemize}
		\item In all the cases above, the interval Garside group $G([1, w_0]_T)$ is (directly) decomposable. When $w_0$ is replaced by a Coxeter element $c$, the interval group $G([1,c]_T)$ is the dual Artin group, isomorphic to the classical Artin group of type $W$, which as shown by Paris~\cite[Proposition 4.2]{Paris_sph} is always indecomposable when $W$ is irreducible. 
		\item The fact that $\mathbb{Z} \times F_k$ is a Garside group already appears in the seminal work of Dehornoy and Paris (it corresponds to the Garside structure given in~\cite[Example 5]{DP} with $p=n=m=k+1$); more recently, Haettel and Huang~\cite{HH} discovered Garside structures for many groups of the form $\mathbb{Z} \times G$. Their construction covers the case where $G$ is part of an important family of right-angled Artin groups.  The groups appearing in points $(1)$, $(3)$ and $(4)$ above are covered by their construction, and we obtain the same Garside structure as theirs. It is an open question to determine for which Artin groups $G$ the direct product $\mathbb{Z} \times G$ is a Garside group. 
	\end{itemize}

	In the last Section~\ref{sec:exple}, we give some examples of interval Garside groups similarly built from involutions or central elements in finite complex reflection groups. These examples show that the situation there can be very different from the real case, especially for central elements of order different from $2$, where the lattice property often fails. Lemmas~\ref{lem_latt_fail} and~\ref{lem_tool_failure}, which are formulated in the general setting of a group $G$ positively generated by a set $A$, capture some of the reasons explaining why the lattice property easily fails in this setting. In several irreducible complex reflection groups of rank $2$, taking $- I_2$ yields Garside structures already obtained in the dihedral case; we finish this section by treating the case $W=G(4,1,3)$ with the element $-I_3$, which gives a Garside group containing the one obtained for type $B_3$ with the interval $[1,w_0]_T$.

	\section{Prerequisites on reflection groups, interval groups and involutions}\label{sec:pre}
	
	\subsection{Interval Garside groups}\label{sub_interval}
	
	In this subsection, we recall the definition of Garside monoids and groups, and how to build interval Garside structures. For more on Garside theory we refer the reader to~\cite{DP, Garside}. 
	
	\begin{definition}[{\cite{DP}}]
		A \defn{Garside monoid} is a pair $(M,\Delta)$, where $M$ is a monoid and $\Delta\in M$, satisfying the following properties:
		\begin{enumerate}
			\item $M$ is left- and right-cancellative,
			\item $M$ is atomic,
			\item left- and right-divisibility endow $M$ with two lattice structures,
			\item the set of left-divisors of $\Delta$ coincides with its set of right-divisors, and generates $M$; this common set is denoted $\mathrm{Div}(\Delta)$,
			\item the set $\mathrm{Div}(\Delta)$ is finite.
		\end{enumerate}
		The elements of $\mathrm{Div}(\Delta)$ are called the \defn{simple elements} (or the \defn{simples}) of $M$.
	\end{definition}
	
	A Garside monoid satisfies Öre's conditions; hence $M$ embeds into a group of fractions $G_M$. Such a group is called a \defn{Garside group}: every monoid presentation of $M$, viewed as a group presentation, yields a presentation of $G_M$. Garside groups have a solvable word problem, are torsion free, have a nontrivial center, admit finite $K(\pi, 1)$'s, etc.--see~\cite{Garside, DP} or~\cite[Section 2]{Gobet_torus} for more on the topic. Seminal examples include (classical and dual) Artin groups of spherical type, and torus knot groups. 
	
	\medskip
	
	Proving that a given group $G$ is a Garside group is a hard task in general. We recall the basic theory of interval Garside groups, which is a powerful method for producing Garside groups. Let $G$ be a group and $A \subseteq G$ such that $G = \langle A \rangle^+$, i.e., such that $A$ positively generates $G$. We consider the length function $\ell_A$ on $G$ associated with the generating set $A$.  
	If $x \in G$ and $x = a_1 a_2 \cdots a_{\ell_A(x)}$ with $a_i \in A$, we say that the word $a_1 a_2 \cdots a_{\ell_A(x)}$ is an \defn{$A$-reduced expression} of $x$. Define a partial order $\leq_A$ on $G$ by:
	\[
	u \leq_A v \ \Leftrightarrow \ \ell_A(u) + \ell_A(u^{-1}v) = \ell_A(v).
	\]
	
	In other words, $u \leq_A v$ if and only if some $A$-reduced expression of $v$ has an $A$-reduced expression of $u$ as a prefix.
	
	Analogously, we define another partial order $\leq_{A, R}$ on $G$ by:
	\[
	u \leq_{A, R} v \ \Leftrightarrow \ \ell_A(u) + \ell_A(vu^{-1}) = \ell_A(v).
	\]
	
	\begin{definition}
		Let $G$ and $A$ be as above. An element $\delta \in G$ is said to be \defn{$A$-balanced} (or simply \defn{balanced} if $A$ is clear from the context) if
		$$\{ g \in G \mid g \leq_A \delta \} = \{ g \in G \mid g \leq_{A, R} \delta \}.$$
		
		We denote this set by $P_\delta$ or $[1, \delta]_A$.
	\end{definition}
	
	\begin{definition}
		Let $\delta \in G$ be an $A$-balanced element. The \defn{interval monoid} $M(P_\delta)$ is the monoid generated by a copy $\{\underline{u} \mid u \in P_\delta\}$ of $P_\delta$ and defined by the following presentation:
		\begin{equation}\label{eq_interval}
			M(P_\delta) = \langle \underline{u} \mid \underline{u} \cdot \underline{v} = \underline{w} \text{ if } u, v, w \in P_\delta, uv = w, \text{ and } u \leq_A w \rangle.
		\end{equation}
	\end{definition}
	
	Note that the map $\varphi: M(P_\delta) \to G$ defined by $\underline{u} \mapsto u$ is a monoid homomorphism. As a consequence, the set $\mathbf{P_\delta} = \{\underline{u} \mid u \in P_\delta\} \subseteq M(P_\delta)$ is in bijective correspondence with $P_\delta$.
	
	\begin{theorem}[{\cite[Theorem 0.5.2]{Bessis}}]
		Let $\delta \in G$ be $A$-balanced. If $(P_\delta, \leq_A)$ is a finite lattice, then $(M(P_\delta), \underline{\delta})$ is a Garside monoid. The associated Garside group $G_{M(P_\delta)}$ is simply denoted $G(P_\delta)$, and the set $\mathrm{Div}(\Delta)$ is given by $\mathbf{P_\delta}$. The map $(P_\delta, \leq_A) \to (\mathbf{P_\delta}, \leq), u \mapsto \underline{u}$ is an isomorphism of posets, where $\leq$ is the restriction of the left-divisibility order on $M(P_\delta)$.  
	\end{theorem}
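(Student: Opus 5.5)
The statement to prove is Bessis' theorem: if $\delta$ is $A$-balanced and $(P_\delta,\leq_A)$ is a finite lattice, then $(M(P_\delta),\underline{\delta})$ is a Garside monoid. I would verify the five axioms in turn, with everything resting on one basic tool: a \emph{normal form / reduced-word} description of $M(P_\delta)$.

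\textbf{Step 1: basic combinatorics of $P_\delta$.} First I would show that $P_\delta$ is closed under the relevant operations. If $u\leq_A v\leq_A\delta$ then $u^{-1}v\in P_\delta$: prepend a reduced word of $u$ and append a reduced word of $v^{-1}\delta$ to a reduced word of $u^{-1}v$, which gives a reduced word for $\delta$. Balancedness then makes $u^{-1}v$ also a right divisor, hence it lies in $P_\delta$. Next I would define the length function $\underline{u}\mapsto \ell_A(u)$. Every defining relation $\underline{u}\,\underline{v}=\underline{w}$ satisfies $\ell_A(u)+\ell_A(v)=\ell_A(w)$, so this extends to an additive length on $M(P_\delta)$ that is positive on nontrivial elements. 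This gives atomicity, with the atoms being the $\underline{a}$ for $a\in A\cap P_\delta$. It also shows that $M(P_\delta)$ is generated by the atoms, and hence by $\mathbf{P_\delta}$.

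\textbf{Step 2: divisors of $\underline{\delta}$.} Using the relations, any $\underline{u}$ left-divides $\underline{\delta}$, since $\underline{u}\cdot\underline{u^{-1}\delta}=\underline{\delta}$. By balancedness the same holds on the right. For the converse I need that every left divisor of $\underline{\delta}$ in the monoid is of the form $\underline{u}$, and that $u\mapsto\underline{u}$ is injective and order-preserving both ways. Injectivity is immediate from $\varphi$. For the rest, I would show by induction on length that a word $\underline{a_1}\cdots\underline{a_k}$ with $a_1\cdots a_k\leq_A\delta$ and $\sum\ell_A(a_i)=\ell_A(a_1\cdots a_k)$ equals $\underline{a_1\cdots a_k}$. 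Then a product $xy=\underline{\delta}$ in $M$ forces $\varphi(x)$ to be a reduced prefix, so $x=\underline{\varphi(x)}$. This yields axioms (4) and (5) and the poset isomorphism.

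\textbf{Step 3: cancellativity and lattices.} This is the main obstacle. The approach I would take is the standard Garside-germ criterion (as in Dehornoy et al.): $\mathbf{P_\delta}$ with partial product $\underline{u}\cdot\underline{v}=\underline{uv}$ whenever $u\leq_A uv\leq_A\delta$ is a germ. The lattice hypothesis on $P_\delta$ is exactly what is needed to check that this germ is left- and right-Noetherian and satisfies the ``lcm'' conditions: any two simples admitting common right multiples within $P_\delta$ have a least one. Here the right-lattice property comes from the left-lattice one via the anti-automorphism $u\mapsto u^{-1}\delta$, using balancedness. The germ theorem then gives that $M(P_\delta)$ is cancellative and that left and right divisibility are lattices, giving axioms (1) and (3).

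If I wanted to avoid the germ machinery, I would instead prove cancellativity and the existence of lcms simultaneously by induction on length, via a reversing/cube-condition argument on the atoms. The lattice property of $P_\delta$ would supply the lcms of atoms inside $[1,\delta]$, and the cube condition would follow from associativity of lcm in the lattice. Proving the cube condition carefully is the step I expect to be most technical.
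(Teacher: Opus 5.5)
The paper does not prove this statement. It quotes it from Bessis and uses it as a black box, so there is no internal argument to compare with. Your outline is sound.

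Steps 1 and 2 are complete elementary verifications. Your remark that $u\mapsto u^{-1}\delta$ is an order-reversing bijection from $(P_\delta,\le_A)$ onto $(P_\delta,\le_{A,R})$ is exactly why a single lattice hypothesis suffices.

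Step 3, however, is delegated rather than proved. The Garside-germ recognition theorem of Dehornoy--Digne--Godelle--Krammer--Michel, in its version for germs derived from a group with a balanced element, contains the present statement essentially as a special case. So all the nontrivial content (cancellativity and lcms) lives there. That is a legitimate route, but three details need care.

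(i) You must identify $\underline{1}$ with the identity. Literally, $\underline{1}\cdot\underline{1}=\underline{1}$ is a defining relation and no relation has an empty side, so $\underline{1}\neq 1$ in the presented monoid. This breaks both your claim that the length is positive on nontrivial elements and cancellativity.

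(ii) Noetherianity comes from the additive length, not from the lattice hypothesis.

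(iii) The one-sided germ theorem yields only left-cancellativity and conditional right-lcms. Right-cancellativity and left-lcms come from applying it to the opposite germ. That germ is the interval germ for $\le_{A,R}$, since $u\le_A uv$ and $v\le_{A,R}uv$ both mean $\ell_A(u)+\ell_A(v)=\ell_A(uv)$. Genuine (not just conditional) lcms then follow from $\underline{u}\,\underline{\delta}=\underline{\delta}\,\underline{\delta^{-1}u\delta}$, which makes $\underline{\delta}^k$ a common multiple of all elements of length at most $k$.

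Your reversing alternative would additionally need a proof of one more fact. You would have to show that the relations expressing $\underline{a\vee b}$ in two ways, for atoms $a\neq b$, generate all relations of $M(P_\delta)$. This is an induction on length that again uses the lattice property.
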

	
	The above theorem can be used to show that both classical and dual Artin groups of spherical type are Garside groups; in both cases, the group $G$ is the attached Coxeter group $W$, and $A$ is respectively the set $S$ of simple reflections and the set $T$ of all reflections in $W$ (see Subsection~\ref{sub_coxeter} below for precise notation). The balanced element is the longest element $w_0$ in the first case, and a Coxeter element $c$ in the second case. 
	
	\subsection{Coxeter groups and absolute order}\label{sub_coxeter}
	
	We shall study interval groups built from Coxeter groups. Let $(W, S)$ be a Coxeter system and let $T = \bigcup_{w\in W} w S w^{-1}$ denote its set of reflections. We refer the reader to~\cite{Humphreys, BB} for background on Coxeter groups. Recall that a subgroup $W'$ of $W$ generated by a subset of $T$ is called a \defn{reflection subgroup} of $W$, and that it admits a canonical structure of Coxeter group, with set of reflections $T'= W' \cap T$ (see~\cite{Dyer_Ref}; the simple system $S'$ is harder to describe, and we shall not need it here). For $A\in \{S, T\}$, denote by $\ell_A: W \longrightarrow \mathbb{Z}_{\geq 0}$ the corresponding length function. Let $B_W$ denote the attached Artin group (obtained from the presentation of $W$ by removing the relations $s^2=1, s\in S$). When $W$ is finite, it can be realized as an interval group in two different ways. Assume that $W$ is finite and denote by $w_0$ its longest element, and by $c$ a Coxeter element (a product of the element of $S$ in some order). Reformulating the work of Brieskorn and Saito~\cite{BS}, extending and refining an original idea of Garside~\cite{Garside_69} in type $A$, one has that $B_W \cong G([1,w_0]_S)$. This in particular shows that Artin groups of spherical type are Garside groups, and gives and elegant, uniform proof of several important properties (solution to the word problem, $K(\pi, 1)$ conjecture, absence of torsion, etc.). The second realization is with $A=T$. Bessis~\cite{Bessis} indeed showed that $B_W \cong G([1,c]_T)$. One of the advantages of this last approach is that when $W$ is infinite, there is no longest element, while Coxeter elements still exist. In particular this approach also works for some infinite Coxeter groups (see~\cite{Bessis_free, Dig1, Dig2, MS, DPS}) and complex braid groups (see~\cite{Bessis_kp}). The approach which consists in studying $W$ and $B_W$ with the generating set $T$ rather than $S$ is usually called the "dual" approach. 
	
	\medskip
	What we shall do in the following sections is to "mix" the two approaches by considering finite Coxeter groups and elements $w_0$ but with a dual point of view, that is, with the generating set $T$. As explained in the introduction, a classification of the finite irreducible Coxeter groups for which $[1, w_0]_T$ is a lattice was given by the second author (see Theorem~\ref{thm_gob_class} and the paragraph after it). In the following section, we identify the various interval Garside groups arising from the finite Coxeter groups listed in Theorem~\ref{thm_gob_class}. 
	
	We list a few properties of the absolute order on intervals $[1,u]_T$, where $u$ is an involution, that we shall use later. Recall that a subgroup of $W$ generated by a subset $I$ of $S$ is called a \defn{standard parabolic subgroup} of $W$, and that every subgroup which is conjugate to a standard parabolic subgroup is a \defn{parabolic subgroup} of $W$. Parabolic subgroups are stable by intersection, and are examples of reflection subgroups. Given an element $w$ of a Coxeter group $W$, we denote by $P(w)$ its parabolic closure, that is, the intersection of all the parabolic subgroups containing $w$ (which is again parabolic). 
	
	\begin{rmq}\label{rmq:parab_clos}
		When $W$ is finite and $w\in W$, the set $P(w)\cap T$ of reflections of $P(w)$ turns out to be $\{t\in W \ \vert \ t \leq_T w\}$, and $P(w)$ has rank $\ell_T(w)$ as a reflection group (see for instance~\cite[Proposition 2.3]{Gobet_invol}). 
	\end{rmq}
	
	\begin{prop}[{\cite[Corollary 3.7, Propositions 3.10 and 3.12]{Gobet_invol}}]\label{prop:invols}
		Let $W$ be a finite Coxeter group. 
		\begin{enumerate}
			\item If $t_1 t_2 \cdots t_k$ is a $T$-reduced expression of an involution $u\in W$, then the $t_i$'s pairwise commute with each other.
			\item The following are equivalent:
			\begin{enumerate}
				\item We have $W = P(w_0)$,
				\item The element $w_0$ is central in $W$,
				\item We have $t\leq_T w_0$ for every $t\in T$.
			\end{enumerate}
			Moreover, if any of the above conditions holds, then as a set, the interval $[1, w_0]_T$ is the set of involutions of $W$. 
		\end{enumerate}
	\end{prop}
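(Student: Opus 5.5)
The plan is to work in the geometric representation $V$ of the finite Coxeter group $W$, with $W$ acting as an orthogonal reflection group, and to rely throughout on Carter's lemma. For $w\in W$ it gives $\ell_T(w)=\dim \mathrm{Mov}(w)$, where $\mathrm{Mov}(w)=\mathrm{Im}(w-1)=\mathrm{Fix}(w)^\perp$. Moreover, a product $t_1\cdots t_k$ of reflections with roots $\alpha_1,\dots,\alpha_k$ is $T$-reduced if and only if the $\alpha_i$ are linearly independent, and in that case they span $\mathrm{Mov}(t_1\cdots t_k)$. I will also use that for an involution $u$, $\mathrm{Mov}(u)$ is the $(-1)$-eigenspace $E_u$ of $u$, since $V=E_u\oplus \mathrm{Fix}(u)$ orthogonally.

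For (1), I argue by induction on $k$. Let $u=t_1\cdots t_k$ be $T$-reduced. By Carter each $\alpha_i$ lies in $E_u$, so $u(\alpha_i)=-\alpha_i$. Hence $ut_iu^{-1}=t_{u(\alpha_i)}=t_i$, so $u$ commutes with every $t_i$.
\begin{itemize}
\item First I show that $u':=ut_1=t_2\cdots t_k$ is an involution. Since $u$ and $t_1$ commute, $u'^2=u^2t_1^2=1$.
\item Next I compute $E_{u'}$. Here $u'$ acts on the line $\mathbb{R}\alpha_1$ as $+1$ and on $E_u\cap\alpha_1^\perp$ as $-1$. On $\mathrm{Fix}(u)$, which is orthogonal to $\alpha_1$, it acts as $+1$. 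So $E_{u'}=E_u\cap \alpha_1^\perp$.
\item The expression $t_2\cdots t_k$ is $T$-reduced, so by Carter $\alpha_2,\dots,\alpha_k$ lie in $E_{u'}$, hence are orthogonal to $\alpha_1$. Thus $t_1$ commutes with each $t_j$, and induction applied to $u'$ finishes the argument.
\end{itemize}
The only delicate point is the Carter input, which I take as standard.

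For (2), I prove the cycle (a)$\Rightarrow$(c)$\Rightarrow$(b)$\Rightarrow$(a).
\begin{itemize}
\item (a)$\Rightarrow$(c) is immediate from Remark~\ref{rmq:parab_clos}: $T\cap P(w_0)=\{t\mid t\le_T w_0\}$, and $P(w_0)=W$.
\item (c)$\Rightarrow$(b): if $t\le_T w_0$, then $t$ is the first letter of a $T$-reduced expression of the involution $w_0$. The argument above shows $w_0(\alpha_t)=-\alpha_t$, so $w_0tw_0=t$. Since $T$ generates $W$, $w_0$ is central.
\item (b)$\Rightarrow$(a): for $s\in S$ one has $w_0(\alpha_s)=-\alpha_{s'}$ with $s'=w_0sw_0$. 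Centrality gives $s'=s$, so $w_0$ acts as $-1$ on the span of the roots, and hence $\mathrm{Fix}(w_0)$ is the fixed space of the whole of $W$.
\end{itemize}
To conclude (b)$\Rightarrow$(a) I use the standard description of $P(w)$ as the pointwise stabilizer of $\mathrm{Fix}(w)$, a parabolic subgroup by Steinberg's theorem. This gives $P(w_0)=W$. I expect this identification of the parabolic closure to be the step needing the most care in citing.

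For the final assertion, assume $w_0$ acts as $-1$ on the span $V'$ of the roots, so that $\ell_T(w_0)=\dim V'=:n$. If $x\le_T w_0$, then $x$ is a prefix of a $T$-reduced expression of $w_0$. By (1) it is a product of pairwise commuting reflections, hence an involution or $1$. Conversely, let $u$ be an involution. The element $u^{-1}w_0=uw_0$ is $-u$ on $V'$, so its $(-1)$-eigenspace in $V'$ is $\mathrm{Fix}(u)\cap V'$. Therefore
$$\ell_T(u)+\ell_T(u^{-1}w_0)=\dim E_u+\dim(\mathrm{Fix}(u)\cap V')=n=\ell_T(w_0),$$
using $E_u\subseteq V'$. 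Hence $u\le_T w_0$, and $[1,w_0]_T$ is exactly the set of involutions of $W$ (including $1$).
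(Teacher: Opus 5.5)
Your proof is correct. The paper gives no argument of its own for this proposition; it quotes it from \cite{Gobet_invol}, whose framework is that of arbitrary Coxeter groups (cf.\ Theorem~\ref{thm_gob_class}). What you have written is therefore a self-contained replacement in the finite case, which is exactly the generality stated here.

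Your route is purely linear-algebraic. Carter's lemma identifies $\mathrm{Mov}(u)$ with the $(-1)$-eigenspace of the involution $u$, and this forces the roots of a reduced factorization to be pairwise orthogonal. The same eigenspace bookkeeping then gives (c)$\Rightarrow$(b). It also yields the description of $[1,w_0]_T$ by a dimension count, once you know that $w_0=-1$ on the span $V'$ of the roots.

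You flagged the identification of $P(w_0)$ with the fixator of $\mathrm{Fix}(w_0)$ via Steinberg's theorem as the delicate step. It can be bypassed using Remark~\ref{rmq:parab_clos}, which the paper makes available:
\begin{itemize}
\item That remark gives (a)$\Leftrightarrow$(c) directly, since $P(w_0)\cap T=\{t\mid t\le_T w_0\}$ and $T$ generates $W$.
\item Your final dimension count, applied to $u=t\in T$, gives $\ell_T(t)+\ell_T(tw_0)=1+(n-1)=\ell_T(w_0)$ as soon as $w_0=-1$ on $V'$. That is, it proves (b)$\Rightarrow$(c).
\item Together with your (c)$\Rightarrow$(b), this closes the equivalence without invoking Steinberg.
\end{itemize}
Alternatively, the same remark gives $\operatorname{rank}P(w_0)=\ell_T(w_0)=|S|$, and a parabolic subgroup of full rank is $W$ itself.
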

	
	\section{Identification of the interval Garside groups in the real case}\label{sec:id}
	
	In this section, we prove our main Theorem~\ref{thm:main}, by identifying the various interval Garside groups $G([1,w]_T)$ attached to those irreducible, finite Coxeter groups occurring in Theorem~\ref{thm_gob_class}. 
	
	\subsection{Even dihedral type}\label{sec_dih}
	Let $W$ be of type $I_2(2k)$ with $k\geq 2$, that is, $$W=\langle s_1, s_2 \ \vert \ s_1^2=s_2^2=1, \ (s_1 s_2)^k = (s_2 s_1)^k \rangle.$$ 
	We have $w_0=(s_1 s_2)^k = (s_2 s_1)^k$ and $\ell_T(w_0)=2$. There are $2k$ reflections in $W$, and by Proposition~\ref{prop:invols} every reflection $t\in T$ satisfies $t \leq_T w_0$. Since $\ell_T(w_0)=2$, for each $t\in T$ there is thus a unique $t'\in T$ such that $tt'= w_0$, and we moreover have $tt'=t't$ by Proposition~\ref{prop:invols} (or simply since $w_0$ is an involution). More explicitly, if $t=s_1 s_2 \cdots s_1$ where the number of factors is $\ell < 2k$, then $t'= s_2 s_1 \cdots s_2$, where the number of factors if $2k - \ell$. We label the reflections $t_1, t_2, \dots, t_{2k}$, in such a way that $t_i t_{i+1}= t_{i+1} t_i$ whenever $i$ is odd. The set $[1,w_0]_T$ is $T\cup \{1, w_0\}$ and the (nontrivial) defining relations of $M([1,w_0]_T)$ are then given by 
	\begin{align*}
		\underline{t_i} \cdot \underline{t_{i+1}} &= \underline{w_0}, \ \text{whenever~}i\text{~is odd}, \\
		\underline{t_{i+1}} \cdot \underline{t_{i}} &= \underline{w_0}, \ \text{whenever~}i\text{~is odd}.
	\end{align*}
	We thus get that $M([1,w_0]_T)$ (and hence $G([1,w_0]_T)$) admits the presentation
	\begin{equation}\label{gars_dih}
		\langle \ \underline{t_1}, \dots, \underline{t_{2k}} \ \vert \ \underline{t_1}\cdot \underline{t_2} = \underline{t_2} \cdot \underline{t_1} = \underline{t_3} \cdot \underline{t_4} = \underline{t_4} \cdot \underline{t_3} = \dots = \underline{t_{2k-1}} \cdot \underline{t_{2k}} = \underline{t_{2k}}\cdot \underline{t_{2k-1}} \ \rangle.
	\end{equation}
	Reintroducing the generator $\Delta:=\underline{w_0} = \underline{t_1} \cdot \underline{t_2}$, inside $G([1,w_0]_T)$ we can remove $\underline{t_{2\ell}}$ for all $\ell=1, \dots, k$ since $\underline{t_{2 \ell}} = \Delta \cdot\underline{t_{2\ell-1}}^{-1} = \underline{t_{2\ell -1}}^{-1}\cdot \Delta$. Relabeling the remaining $\underline{t_i}$'s by setting $r_\ell= \underline{t_{2\ell -1}}$ for all $\ell=1, \dots, k$, we get the presentation 
	\begin{equation}
		G([1,w_0]_T) = \langle \ r_1, r_2, \dots, r_k, \Delta  \ \vert \ r_i \Delta = \Delta r_i, \, \forall i = 1, \dots, k \ \rangle \cong F_k \times \mathbb{Z}.
	\end{equation}
	
	\begin{rmq}
		It is known that $F_k \times \mathbb{Z}$ is a Garside group; a Garside presentation already appeared in~\cite[Example 5]{DP}, and is given by \begin{equation*}
			\langle \ x_1, x_2, \dots, x_{k+1} \ \vert \ x_1 x_2 \cdots x_{k+1} = x_2 x_3 \cdots x_{k+1} x_1= \cdots = x_{k+1} x_1 x_2 \cdots x_k \ \rangle.
		\end{equation*}
		More recently, Haettel and Huang~\cite{HH} showed that for a large family of right angled Artin groups $G$ including $F_k$, the group $G \times \mathbb{Z}$ is a Garside group. The Garside presentation that they obtain for $F_k \times \mathbb{Z}$ is the same as the one given in~\eqref{gars_dih}.
	\end{rmq}
	
	\subsection{Type $B_n$}\label{sub_bn}
	Let $W$ be of type $B_n$, with simple system $S=\{s_0, s_1, \dots, s_{n-1}\}$, where $s_1, \dots, s_{n-1}$ are the simple transpositions generating the standard parabolic subgroup of type $A_{n-1}$, and $(s_0 s_1)^4=1$.
	
	We shall use the realization of $W$ as the group of signed permutations, that is, the group of permutations $\sigma$ of $[\pm n]:=\{\pm 1, \pm 2, \dots, \pm n \}$ such that $\sigma(-i)= - \sigma(i)$ for all $i\in [\pm n]$ (see~\cite[Section 8.1]{BB} for more details). We will work throughout with this combinatorial model.
	The reflections in $W$ are given by:
	\begin{itemize}[leftmargin=0.8cm]
		\item the elements $[i]:=(i,-i)$ for $i=1,\dots,n$, called \textit{balanced reflections};
		\item the elements $t_{i,j}=(i,j)(-i,-j)$ for $1\leq i<j\leq n$, called \textit{paired reflections of type I};
		\item the elements $r_{i,j}=(i,-j)(-i,j)$ for $1\leq i<j\leq n$, called \textit{paired reflections of type II}.
	\end{itemize}
	Notice that the elements $t_{i,j}$ are precisely the reflections of the standard parabolic subgroup of type $A_{n-1}$. We have $s_0=(1,-1)$ and $s_i=t_{i,i+1}$ for all $i=1,\dots,n-1$. The absolute length on $W$ was studied by Brady and Watt~\cite[Section 3]{BW}.
	
	The element $w_0$ is central in $W$, hence $W=P(w_0)$ by Proposition~\ref{prop:invols}. By~Theorem~\ref{thm_gob_class} and Proposition~\ref{prop:invols}, the interval $[1, w_0]_T$ is a lattice which coincides as a set with the set of involutions of $W$. This was already observed by Kallipoliti~\cite[Theorem 7 and Section 6]{Kal}.
	
	The involutions in $W$ may be described as follows: viewing $W$ inside the larger symmetric group $S_{2n}$ on $\{\pm 1, \pm 2, \dots, \pm n\}$, we get that every involution $u$ of $W$ is a product of transpositions of $S_{2n}$ with disjoint support. The condition $u(-i)=-u(i)$
	implies that the transpositions occurring are either of the form $(i, -i)$, or come in pairs $(i, j)$ and $(-i,-j)$, where $i \neq \pm j$. Therefore, $u$ is of the form \begin{equation}\label{form_ref_bn} u = \left(\prod_{i\in A} [i]\right)\left( \prod_{j=1}^k t_{a_j, b_j} \right)\left(\prod_{j=1}^{k'} r_{c_j, d_j}\right),\end{equation} where $A$ denotes the set of indices $i\in\{1,\dots,n\}$ such that $u(i)=-i$, and the sets
	$A$, $\{a_1,b_1\},\dots,\{a_k,b_k\}$, and $\{c_1,d_1\},\dots,\{c_{k'},d_{k'}\}$ are disjoint subsets of $\{1,\dots,n\}$.

	Conversely, every element of this form is an involution in $W$. By~\cite[Proposition 3.5]{BW}, we have $\ell_T(u)=|A|+k+k'$; in particular, the expression in~\eqref{form_ref_bn} is a $T$-reduced expression for $u$.
	Up to commutation of the factors, it yields a canonical
	$T$-reduced expression of $u$; in fact, this expression is closely related to the cycle decomposition of $u$ viewed inside $S_{2n}$: every factor appearing in~\eqref{form_ref_bn} corresponds either to a balanced cycle (for elements of the form $[i]$, $i\in A$), or to a symmetric pair of cycles (for the remaining ones) appearing in the cycle decomposition of $u$. 
	
	Using~\cite[Proposition 3.5]{BW} again, we see the following: given a balanced reflection $t=[i]$, we have $t \leq_T u$ if and only if $i\in A$. Given a paired reflection $t=t_{i,j}$  of type I (respectively a paired reflection $t=r_{i,j}$ of type II), we have $t \leq_T u$ if and only if either both $i,j$ lie in $A$, or there exists $p\in \{1,\dots,k\}$ (resp. $p\in \{1,\dots,k'\}$) such that $\{i,j\}=\{a_p,b_p\}$ (resp. $\{i,j\}=\{c_p,d_p\}$).

	\medskip

	Recall that the interval monoid $M([1, w_0]_T)$ is generated by a copy $\mathbf{P}_{w_0}= \{ \underline{u} \ \vert \ u \in W, u^2=1 \}$ of the set $P_{w_0}=[1,w_0]_T$ of involutions of $W$. Let $\underline{u}\in \mathbf{P}_{w_0}$, and suppose that $u= t_1 t_2 \cdots t_k$ is a $T$-reduced expression of $u$. Then $ t_1 t_2 \cdots t_i \leq_T u$ for all $i$, and repeated application of the defining relations yields $$ \underline{u} = \underline{t_1 \cdots t_{k-1}} \cdot \underline{t_k } = \underline{t_1 \cdots t_{k-2}} \cdot  \underline{t_{k-1}} \cdot  \underline{t_k} = \dots = \underline{t_1} \cdot\underline{t_2} \cdot \cdots \cdot \underline{t_k}.$$ Hence, the copy $\mathbf{T}= \{ \underline{t} \ \vert \ t\in T \}$ of the set of reflections generates $M([1, w_0]_T)$ since every reflection $t\in T$ satisfies $t\leq_T w_0$ (see Proposition~\ref{prop:invols}). 
	This is a special case of the general fact that, in the notation of Section~\ref{sub_interval}, the interval monoid $M(P_\delta)$ is generated by a copy of $P_\delta\cap A$, which in the present setting is $T$.
	
	Using this smaller generating set, the presentation~\eqref{eq_interval} can be rewritten as 
	\begin{equation}\label{brut_bn} M([1, w_0]_T) =
		\left\langle\, \mathbf{T} \ \middle| \ 
		\begin{array}{l}
			\underline{t_1} \cdots \underline{t_k} = \underline{q_1} \cdots \underline{q_k} \quad \text{whenever } \\
			t_1 \cdots t_k = q_1 \cdots q_k = u \in P_{w_0} \text{~and~} \ell_T(u) = k
		\end{array}
		\,\right\rangle,\end{equation}
	that is, lifts of $T$-reduced expressions of elements of $P_{w_0}$ are identified. We will show that it is sufficient to consider only relations arising from elements $u\in P_{w_0}$ with $\ell_T(u)=2$.

	\begin{lemma}\label{lem_rel_2_bn}
		With the above notation, we have 
		\begin{equation}\label{brut2_bn} M([1, w_0]_T) =
			\left\langle\, \mathbf{T} \ \middle| \ 
			\begin{array}{l}
				\underline{t_1} \cdot \underline{t_2} = \underline{q_1} \cdot \underline{q_2} \quad \text{whenever } \\
				t_1 \cdot t_2 = q_1 \cdot q_2 = u \in P_{w_0} \text{~and~} \ell_T(u) = 2
			\end{array}
			\,\right\rangle,\end{equation} which written in terms of balanced and paired reflections yields the presentation with generators $\underline{[i]}$, $\underline{t_{i,j}}$ and $\underline{r_{i,j}}$, and relations 
		\begin{align*}  \underline{[i]} \cdot \underline{[j]} = \underline{[j]} \cdot \underline{[i]} = \underline{t_{i,j}} \cdot \underline{r_{i,j}} = \underline{r_{i,j}} \cdot \underline{t_{i,j}}, & \quad \text{if } i < j, \\ \underline{[i]} \cdot \underline{t_{j,k}} = \underline{t_{j,k}} \cdot \underline{[i]}, \quad \underline{[i]} \cdot \underline{r_{j,k}} = \underline{r_{j,k}} \cdot \underline{[i]}, & \quad \text{if } |\{i,j,k\}| = 3, \ j < k ,\\  \underline{t_{i,j}} \cdot \underline{t_{k,\ell}} = \underline{t_{k,\ell}} \cdot \underline{t_{i,j}}, \quad \underline{t_{i,j}} \cdot \underline{r_{k,\ell}} = \underline{r_{k,\ell}} \cdot \underline{t_{i,j}}, & \\  \underline{r_{i,j}} \cdot \underline{r_{k,\ell}} = \underline{r_{k,\ell}} \cdot \underline{r_{i,j}}, & \quad \text{if } |\{i,j,k,\ell\}| = 4, \ i < j, \ k < \ell. \end{align*} 
	\end{lemma}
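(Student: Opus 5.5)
Two things need to be shown. First, the relations of length two already imply all the relations in~\eqref{brut_bn}. Second, the length-two relations are exactly the ones listed. I would treat the second point first, because it is a direct combinatorial check.

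For the explicit list, take $u\in P_{w_0}$ with $\ell_T(u)=2$ and write its canonical form~\eqref{form_ref_bn}. Then $(|A|,k,k')$ is one of $(2,0,0)$, $(1,1,0)$, $(1,0,1)$, $(0,2,0)$, $(0,1,1)$, $(0,0,2)$. By Proposition~\ref{prop:invols}(1), any $T$-reduced expression $u=q_1q_2$ consists of commuting reflections with $q_1,q_2\leq_T u$. The description of $\{t\in T \mid t\leq_T u\}$ obtained from~\cite[Proposition 3.5]{BW} gives the possible $q_1$: balanced reflections $[i]$ with $i\in A$, together with $t_{i,j}$ and $r_{i,j}$ when $A=\{i,j\}$, and otherwise only the paired reflections listed in the canonical form. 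Moreover $q_2=q_1u$ is determined by $q_1$.
\begin{itemize}
\item If $A=\{i,j\}$, then $u=[i][j]=t_{i,j}r_{i,j}$, so there are four reduced expressions and we get the chain of equalities in the first line.
\item In every other case $u$ has exactly two factors in its canonical form, and the only relation is the commutation of these two factors.
\end{itemize}
This reproduces the displayed presentation.

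For the reduction to length two, I would argue by induction on $k=\ell_T(u)$, working in the monoid $M'$ defined by~\eqref{brut2_bn}. The claim is that any two lifts $\underline{t_1}\cdots\underline{t_k}$ and $\underline{q_1}\cdots\underline{q_k}$ of $T$-reduced expressions of the same $u$ are equal in $M'$. By Proposition~\ref{prop:invols}(1), all the $t_i$ commute pairwise, and so do all the $q_i$. In particular $t_it_j$ is an involution of length $2$, so adjacent factors of the first word may be swapped in $M'$, and the same holds for the second word. Hence each word may be reordered freely in $M'$.

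Now consider $q_1\leq_T u$. If $q_1=t_i$ for some $i$, move $\underline{t_i}$ to the front. Then $t_1\cdots\widehat{t_i}\cdots t_k$ and $q_2\cdots q_k$ are both reduced expressions of $q_1u$, which lies in $P_{w_0}$ and has length $k-1$, so the induction hypothesis applies. If $q_1$ is not among the $t_i$, I would use the explicit structure. Since $q_1\leq_T u$ but $q_1$ differs from every $t_i$, the reflection $q_1$ must be some $t_{a,b}$, $r_{a,b}$ or $[a]$ "crossing" two factors of the first word. Those two factors multiply to a length-two involution $v$ with $q_1\leq_T v$, so $v=q_1q'$ for some reflection $q'$.
\begin{itemize}
\item The typical instance is $[a][b]=t_{a,b}r_{a,b}$ with $a,b\in A$.
\item The other instance arises when the first word uses $t_{a,b}r_{a,b}$ and $q_1=[a]$.
\end{itemize}
A single length-two relation then rewrites the first word into one that begins with $\underline{q_1}$, after reordering, and we are back in the previous case.

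I expect the main obstacle to be this crossing step. One has to check, using the characterisation of $\leq_T u$ from~\cite{BW}, that whenever $q_1$ appears in no given reduced expression, there really are two factors $t_i,t_j$ with $q_1\leq_T t_it_j$. Equivalently, the "support" of $q_1$ must be covered by at most two factors. The plan is a case analysis on the type of $q_1$ (balanced, type I or type II) against the factors $[i]$, $t_{i,j}$, $r_{i,j}$ and $t_{i,j}r_{i,j}$ that can occur in an arbitrary, not necessarily canonical, reduced expression of $u$.
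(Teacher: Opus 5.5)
Your enumeration of the length-two relations is correct, and so is the observation that each word can be reordered freely in $M'$. The gap is exactly at the crossing step you flagged, and the claim there is false, not merely unverified.

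Take $n=3$ and $u=w_0$, with first word $[1]\,t_{2,3}\,r_{2,3}$ and second word $t_{1,2}\,r_{1,2}\,[3]$. Then $q_1=t_{1,2}$ is not a factor of the first word. By the Brady--Watt criterion it lies below none of the products of two factors:
\begin{itemize}
\item not below $[1]t_{2,3}$ or $[1]r_{2,3}$, whose set $A$ is $\{1\}$;
\item not below $t_{2,3}r_{2,3}=[2][3]$, whose set $A$ is $\{2,3\}$.
\end{itemize}
This holds even though its support $\{1,2\}$ is covered by the two factors $[1]$ and $t_{2,3}$. So ``support covered by two factors'' is not equivalent to lying below their product, and no single length-two relation brings $\underline{t_{1,2}}$ to the front.

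Choosing a different letter of the second word does not help in general. In $B_4$, take the words $t_{1,2}r_{1,2}t_{3,4}r_{3,4}$ and $t_{1,3}r_{1,3}t_{2,4}r_{2,4}$ of $w_0$: no letter of either word lies below a product of two letters of the other. The failure occurs whenever $q_1\in\{t_{a,b},r_{a,b}\}$ with $a,b\in A$ but $[a]$ and $[b]$ are not both factors of the first word, because $a$ or $b$ is hidden in a pair $t_{e,f}r_{e,f}$. Several moves are then needed, first undoing such pairs into $[e][f]$.

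The missing idea is to route through the canonical expression $C=\prod_{i\in A}[i]\prod_j t_{a_j,b_j}\prod_j r_{c_j,d_j}$. The paper proves that every $T$-reduced expression of $u$ is, up to commutation,
$[i_1]\cdots[i_p]\,t_{e_1,f_1}r_{e_1,f_1}\cdots t_{e_\ell,f_\ell}r_{e_\ell,f_\ell}\prod_j t_{a_j,b_j}\prod_j r_{c_j,d_j}$,
with $\{i_1,\dots,i_p\},\{e_1,f_1\},\dots,\{e_\ell,f_\ell\}$ partitioning $A$. It then replaces each pair $t_{e,f}r_{e,f}$ by $[e][f]$. Your planned case analysis over factors that ``can occur in an arbitrary reduced expression'' presupposes this description, which you neither prove nor avoid.

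Your induction can be repaired without that structural result: in the inductive step, always take the first word to be $C$.
\begin{itemize}
\item If $q_1\leq_T u$ is not a factor of $C$, the Brady--Watt criterion forces $q_1\in\{t_{a,b},r_{a,b}\}$ with $a,b\in A$.
\item Then $[a]$ and $[b]$ are factors of $C$, and the single relation $\underline{[a]}\cdot\underline{[b]}=\underline{q_1}\cdot\underline{q'}$, where $\{q_1,q'\}=\{t_{a,b},r_{a,b}\}$, brings $\underline{q_1}$ to the front.
\item The induction hypothesis for the length-$(k-1)$ involution $q_1u$, applied to two arbitrary words, finishes the step.
\end{itemize}
Any two reduced expressions of $u$ are then equal in $M'$, since both are equal to the lift of $C$.
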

	
	\begin{proof}
		It suffices to show that any two $T$-reduced expressions $t_1 t_2 \cdots t_k$ and $q_1 q_2 \cdots q_k$ of an involution $u$ can be related by a sequence of applications of relations of the form $rr' = qq'$ on two successive factors, where $r,r',q,q'\in T$ and $rr'$ is an involution. We call such an application an \defn{admissible} move. Indeed, if this holds, then all defining relations appearing in~\eqref{brut_bn} follow from these, yielding Presentation~\eqref{brut2_bn}.
		
		We write $u$ in the form~\eqref{form_ref_bn}, that is,
		\begin{equation}\label{u_proof}u=
			\prod_{i\in A} [i]\;
			\prod_{j=1}^k t_{a_j,b_j}\;
			\prod_{j=1}^{k'} r_{c_j,d_j},
		\end{equation}
		where the subsets
		$A, \{a_1,b_1\}, \dots, \{a_k,b_k\}, \{c_1,d_1\}, \dots, \{c_{k'},d_{k'}\}$
		are disjoint subsets of $\{1,2,\dots,n\}$. It suffices to show that any $T$-reduced expression
		$t_1 t_2 \cdots t_{\ell_T(u)}$ of $u$ can be transformed into this canonical one by admissible moves.
		
		Recall from Proposition~\ref{prop:invols} that in such a $T$-reduced expression of an involution, the occurring factors $t_i$ pairwise commute. In particular, every adjacent product $t_i t_{i+1}$ is again an involution for all $i=1, \dots, \ell_T(u)-1$. Hence every commutation relation applied inside a $T$-reduced expression of an involution is an admissible move.

		We now claim that, up to commutation of the factors, every $T$-reduced expression of $u$ is of the form \begin{equation}\label{eq_invol_red_bn} [i_1] [i_2] \cdots [i_p] t_{e_1, f_1} r_{e_1, f_1} \cdots t_{e_\ell, f_\ell} r_{e_\ell, f_\ell} \prod_{j=1}^k t_{a_j, b_j} \prod_{j=1}^{k'} r_{c_j, d_j},\end{equation} where the sets $\{i_1, i_2, \dots, i_p\}$, $\{e_1, f_1\}$, \dots, $\{e_\ell, f_\ell\}$ form a partition of $A$.
		Firstly, observe that the above product equals $u$ since $[i_1] [i_2] \cdots [i_p] t_{e_1, f_1} r_{e_1, f_1} \cdots t_{e_\ell, f_\ell} r_{e_\ell, f_\ell}= \prod_{i\in A} [i]$. Moreover, it contains $p + 2\ell + k + k' = |A| + k + k' = \ell_T(u)$ factors, hence is $T$-reduced. Conversely, let $t_1 \cdots t_{\ell_T(u)}$ be a $T$-reduced expression of $u$, and let's show that, up to commutation, it is of the form~\eqref{eq_invol_red_bn}. By Proposition~\ref{prop:invols}, the reflections $t_i$ pairwise commute. Let
		$1 \leq j_1 < \cdots < j_p \leq \ell_T(u)$
		be the indices such that $t_{j_q}$ is balanced, and write $t_{j_q} = [i_q]$. Since $[i_q]$ has a support which is disjoint from the support of any distinct reflection commuting with it, we must have $u(i_q)=-i_q$, hence $i_q \in A$. Up to reordering, which amounts to apply commutation relations, we may assume that
		$t_1=[i_1],\dots,t_p=[i_p]$.

		Any remaining reflection $t$ among $t_{p+1}, \dots, t_{\ell_T(u)}$ is thus paired, with the additional requirements that 
		\begin{itemize}
			\item $\{i,j\} \cap \{i_1, i_2, \dots, i_p\} = \emptyset$ (otherwise there is a reflection in $t_1, t_2, \dots, t_p$ which does not commute with $t$),
			\item two distinct reflections $t, t'$ in $t_{p+1}, \dots, t_{\ell_T(u)}$ either have disjoint support, or up to commutation are of the form $t=t_{i,j}$, $t'=r_{i,j}$ (this last case is the only situation where two distinct reflections with nondisjoint supports can commute with each other). 
		\end{itemize}
		Assume that there are $t, t'$ in $\{t_{p+1}, \ldots, t_{\ell_T(u)}\}$ distinct but with nondisjoint supports. Then up to commutation $t= t_{i,j}$, $t'=r_{i,j}$, and since all the $t_i$'s are commuting with each other and $tt'=[i][j]$, then arguing as in the first case we see that $i, j\in A$. The reflections that come by pairs are thus of the form $t_{e_1, f_1}$, $r_{e_1, f_1}$, \dots, $t_{e_\ell, f_\ell}$, $r_{e_\ell, f_\ell}$ where the sets $\{e_1, f_1\}$, $\dots$, $\{e_\ell, f_\ell\}$ are included in $A$, disjoint, and disjoint from $\{i_1, \dots, i_p\}$. Now assume that $t$ is a reflection among $t_{p+1}$, $\dots$, $t_{\ell_T(u)}$ which has disjoint support with any other reflection among them. It also has disjoint support with $t_1, \dots, t_p$. It thus has disjoint support with every other reflection in $t_1, t_2, \dots, t_{\ell_T(u)}$. Since the latter are all pairwise commuting and $t$ is of the form $t_{i,j}$ or $r_{i,j}$, the only possibility to get a product equal to~\eqref{u_proof} is that $t$ is one of the $t_{a_j, b_j}$'s, or one of the $r_{c_j, d_j}$'s. 
		
		Putting everything together, we get that the $T$-reduced expression $t_1 t_2 \cdots t_{\ell_T(u)}$ is, up to commutation, of the form $$[i_1] [i_2] \cdots [i_p] t_{e_1, f_1} r_{e_1, f_1} \cdots t_{e_\ell, f_\ell} r_{e_\ell, f_\ell} \prod_{j=1}^k t_{a_j, b_j}^{\varepsilon_j} \prod_{j=1}^{k'} r_{c_j, d_j}^{\delta_j},$$ where the sets $\{i_1, i_2, \dots, i_p\}$, $\{e_1, f_1\}$, \dots, $\{e_\ell, f_\ell\}$ are disjoint and included in $A$, and $\varepsilon_j, \delta_j \in\{0, 1\}$. But the number of factors in the above expression is given by $p + 2\ell + \sum_{j=1}^k \varepsilon_j + \sum_{j=1}^{k'} \delta_j$, which has to be equal to $\ell_T(u)=|A| + k + k'$, which, since $p+2\ell \leq |A|$, forces $\varepsilon_j=1$ for all $j=1, \dots, k$, $\delta_j=1$ for all $j=1, \dots, k'$, and $p+2\ell = |A|$. The $T$-reduced expression thus has the required form up to commutation.

		The last statement is obtained by explicitly listing the relations corresponding to admissible moves. 
	\end{proof}
	
	For $n=3$, the lattice of simple elements, which is the same as the lattice of involutions in the Coxeter group of type $B_3$ partially ordered by the restriction of the absolute order, is given in Figure~\ref{fig:b3}. 
	
	\begin{figure}[h!]
		\[\begin{tikzcd}[
			scale=0.75,
			transform shape,
			column sep=small,
			row sep=small,     cells={nodes={font=\scriptsize}}
			]
			&&&& {[1][2][3]} &&&& \\
			{[1]t_{2,3}} & {[1][2]} & {[1] r_{2,3}} & {[2] t_{1,3}} & {[1][3]} & {[2]r_{1,3}} & {[3] t_{1,2}} & {[2][3]} & {[3] r_{1,2}} \\
			&&& {} \\
			{[1]} & {t_{1,2}} & {r_{1,2}} & {t_{1,3}} & {[2]} & {r_{1,3}} & {t_{2,3}} & {r_{2,3}} & {[3]} \\
			&&&& e
			\arrow[no head, from=2-1, to=1-5]
			\arrow[no head, from=2-2, to=1-5]
			\arrow[no head, from=2-3, to=1-5]
			\arrow[no head, from=2-4, to=1-5]
			\arrow[no head, from=2-5, to=1-5]
			\arrow[no head, from=2-6, to=1-5]
			\arrow[no head, from=2-7, to=1-5]
			\arrow[no head, from=2-8, to=1-5]
			\arrow[no head, from=2-9, to=1-5]
			\arrow[no head, from=4-1, to=2-1]
			\arrow[no head, from=4-1, to=2-2]
			\arrow[no head, from=4-1, to=2-3]
			\arrow[no head, from=4-1, to=2-5]
			\arrow[no head, from=4-2, to=2-2]
			\arrow[no head, from=4-2, to=2-7]
			\arrow[no head, from=4-3, to=2-2]
			\arrow[no head, from=4-3, to=2-9]
			\arrow[no head, from=4-4, to=2-4]
			\arrow[no head, from=4-4, to=2-5]
			\arrow[no head, from=4-5, to=2-2]
			\arrow[no head, from=4-5, to=2-4]
			\arrow[no head, from=4-5, to=2-6]
			\arrow[no head, from=4-5, to=2-8]
			\arrow[no head, from=4-6, to=2-5]
			\arrow[no head, from=4-6, to=2-6]
			\arrow[no head, from=4-7, to=2-1]
			\arrow[no head, from=4-7, to=2-8]
			\arrow[no head, from=4-8, to=2-3]
			\arrow[no head, from=4-8, to=2-8]
			\arrow[no head, from=4-9, to=2-5]
			\arrow[no head, from=4-9, to=2-7]
			\arrow[no head, from=4-9, to=2-8]
			\arrow[no head, from=4-9, to=2-9]
			\arrow[no head, from=5-5, to=4-1]
			\arrow[no head, from=5-5, to=4-2]
			\arrow[no head, from=5-5, to=4-3]
			\arrow[no head, from=5-5, to=4-4]
			\arrow[no head, from=5-5, to=4-5]
			\arrow[no head, from=5-5, to=4-6]
			\arrow[no head, from=5-5, to=4-7]
			\arrow[no head, from=5-5, to=4-8]
			\arrow[no head, from=5-5, to=4-9]
		\end{tikzcd}\]
		\caption{The lattice $[1,w_0]_T$ in type $B_3$ which, as a set, is the set of involutions of $B_3$. It is the lattice of simples of $M([1,w_0]_T)$ in type $B_3$.}
		\label{fig:b3}
	\end{figure}
	
	\begin{rmq}
		Lemma~\ref{lem_rel_2_bn} tells us that defining relations with both sides consisting of words of length $2$ are enough to generate $M([1,w_0]_T)$. For dual braid monoids attached to spherical type Coxeter groups, this property also holds (see~\cite{Bessis}). At the level of Coxeter groups, this can be seen as a kind of dual version of the Mastumoto property relating $S$-reduced expressions of elements of Coxeter groups. Nevertheless, as we shall see in type $D_4$ below (see Subsection~\ref{d4}), in the case of interval monoids attached to involutions it does not hold all the time, hence using the specific combinatorics of the Coxeter groups of type $B_n$ rather than a general argument seems necessary. Also, for Coxeter elements a classical way to formulate this property for $T$-reduced expressions is to use the so-called \textit{Hurwitz action} of the braid group; the fact that any two $T$-reduced expressions can be related by "local" moves is equivalent to the transitivity of this action. Here, even if this property of connectivity by "local" moves stays valid, the Hurwitz action is irrelevant: as we have seen, we have local relations of the form $[i] [j] = t_{i,j} r_{i,j}$, hence with $4$ different factors, while when considering the Hurwitz action there is always one common factor in each side. 
	\end{rmq}
	
	We now study the interval group $G([1,w_0]_T)$. The presentations from Lemma~\ref{lem_rel_2_bn} have a minimal number of generators as monoid presentations, but not as group presentations. We shall remove some of the generators to get a more tractable presentation:

	\begin{prop}\label{prop_bn_clean}
		The group $G([1, w_0]_T)$ is isomorphic to $$ \mathbb{Z} \times \left\langle\, \begin{array}{l} [1], \dots, [n-1], \\ t_{i,j}, \quad 1 \leq i < j \leq n \end{array} \ \middle| \ \begin{array}{ll} \underline{[i]} \cdot \underline{[j]} = \underline{[j]} \cdot \underline{[i]}   & \text{if } i < j, \\[4pt] \underline{[i]} \cdot \underline{[j]} \cdot\underline{t_{i,j}} =\underline{t_{i,j}} \cdot \underline{[i]} \cdot \underline{[j]} & \text{if } i < j,\\[4pt] \underline{[i]} \cdot \underline{t_{j,k}} = \underline{t_{j,k}} \cdot \underline{[i]}  & \text{if } |\{i,j,k\}| = 3, \ j < k \\[4pt] \underline{t_{i,j}} \cdot \underline{t_{k,\ell}} = \underline{t_{k,\ell}} \cdot \underline{t_{i,j}}  & \text{if } |\{i,j,k,\ell\}| = 4, \ i < j, \ k < \ell. \end{array} \,\right\rangle $$
	\end{prop}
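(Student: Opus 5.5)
Starting from the presentation of $G([1,w_0]_T)$ given by Lemma~\ref{lem_rel_2_bn}, which is also a group presentation, I will do Tietze transformations. Fix $i<j$. The relation $\underline{[i]}\cdot\underline{[j]} = \underline{t_{i,j}}\cdot\underline{r_{i,j}}$ gives $\underline{r_{i,j}} = \underline{t_{i,j}}^{-1}\underline{[i]}\,\underline{[j]}$ in the group. So I delete every $\underline{r_{i,j}}$ and substitute this expression into the other relations. The relation $\underline{t_{i,j}}\,\underline{r_{i,j}} = \underline{r_{i,j}}\,\underline{t_{i,j}}$ then becomes $\underline{[i]}\,\underline{[j]} = \underline{t_{i,j}}^{-1}\underline{[i]}\,\underline{[j]}\,\underline{t_{i,j}}$, i.e. $\underline{t_{i,j}}$ commutes with $\underline{[i]}\,\underline{[j]}$. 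This is the second family of relations in the statement. The commutations of $\underline{r_{j,k}}$ with $\underline{[i]}$, or with $\underline{t_{k,\ell}}$ and $\underline{r_{k,\ell}}$ on disjoint supports, follow from the corresponding commutations of $\underline{t}$'s and $\underline{[\cdot]}$'s. They are therefore redundant and can be dropped. What remains is generated by the $\underline{[i]}$ ($1\le i\le n$) and the $\underline{t_{i,j}}$, with these relations:
\begin{itemize}
\item all $\underline{[i]}$ commute pairwise;
\item $\underline{t_{i,j}}$ commutes with $\underline{[i]}\,\underline{[j]}$;
\item $\underline{t_{j,k}}$ commutes with $\underline{[i]}$ for $i\notin\{j,k\}$;
\item $\underline{t}$'s with disjoint supports commute.
\end{itemize}

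To split off the $\mathbb{Z}$ factor, I introduce $\Delta := \underline{[1]}\cdots\underline{[n]}$, which is $\underline{w_0}$, a Garside element. It is central since $w_0$ is central, and one can also check this directly from the relations: $\underline{t_{i,j}}$ commutes with $\underline{[i]}\,\underline{[j]}$ and with every other $\underline{[k]}$, and these all commute. I then replace the generator $\underline{[n]}$ by $\Delta$ via $\underline{[n]} = \underline{[n-1]}^{-1}\cdots\underline{[1]}^{-1}\Delta$. The relations change as follows:
\begin{itemize}
\item Commutations among the $\underline{[i]}$ become commutations of $\underline{[i]}$ ($i<n$) with each other and with $\Delta$.
\item For $j<n$, the relation "$\underline{t_{i,n}}$ commutes with $\underline{[i]}\,\underline{[n]}$" becomes: $\underline{t_{i,n}}$ commutes with $\Delta\cdot\prod_{k\ne i,n}\underline{[k]}^{-1}$. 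Since $\underline{t_{i,n}}$ commutes with each such $\underline{[k]}$ (third family), this is equivalent to $\underline{t_{i,n}}$ commuting with $\Delta$.
\item "$\underline{t_{j,k}}$ commutes with $\underline{[n]}$" for $j,k<n$ becomes commutation with $\Delta$, modulo $\underline{[j]}\,\underline{[k]}$ and the other $\underline{[i]}$.
\end{itemize}

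Conversely, I need centrality of $\Delta$ to appear among the relations. It is cleanest to add the relations $[\Delta,x]=1$ for all generators $x$ first, which is harmless since they are consequences. After the substitution, every relation involving $\Delta$ becomes exactly "$\Delta$ commutes with $x$", and the remaining relations involve only $\underline{[1]},\dots,\underline{[n-1]}$ and the $\underline{t_{i,j}}$. These are exactly the relations in the statement, with the conventions that $\underline{[i]}\,\underline{[n]}$ is effectively dropped and that pairs involving $n$ become trivial. The result is $\mathbb{Z}\times G_n$.

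The main obstacle is this bookkeeping. I must check that for pairs $(i,n)$ the relation $\underline{[i]}\,\underline{[n]}\,\underline{t_{i,n}} = \underline{t_{i,n}}\,\underline{[i]}\,\underline{[n]}$ becomes exactly a consequence of $\Delta$ being central together with the third family. I must also make sure the statement's second family is read only for indices where it makes sense, since $[n]$ is no longer a generator. I would handle this by showing explicitly that $\underline{t_{i,n}}$ commutes with $\prod_{k\neq i,n}\underline{[k]}$, so the relation reduces to commutation with $\Delta$. For $i<j<n$ the second-family relation is kept verbatim.
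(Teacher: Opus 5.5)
Your proposal is correct and follows the paper's proof. You eliminate the $\underline{r_{i,j}}$ via $\underline{[i]}\,\underline{[j]}=\underline{t_{i,j}}\,\underline{r_{i,j}}$ (the paper uses $\underline{r_{i,j}}\,\underline{t_{i,j}}$ instead, which makes no difference), adjoin the central element $\Delta=\underline{[1]}\cdots\underline{[n]}$, eliminate $\underline{[n]}$, and split off $\mathbb{Z}$; you also spell out the redundancy checks that the paper summarizes as ``all other added relations are redundant'', including the correct reading that relations involving $\underline{[n]}$ are dropped from the statement.
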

	
	\begin{proof}
		The presentation given in~\eqref{brut2_bn} is also a presentation of $G([1, w_0]_T)$. Removing the generators $\underline{r_{i,j}}$ via $\underline{r_{i,j}}= \underline{[i]} \cdot \underline{[j]} \cdot \underline{t_{i,j}}^{-1}$ yields a presentation which simplifies to \[\left\langle\, \begin{array}{l} [1], \dots, [n], \\ t_{i,j}, \quad 1 \leq i < j \leq n \end{array} \ \middle| \ \begin{array}{ll} \underline{[i]} \cdot \underline{[j]} = \underline{[j]} \cdot \underline{[i]}   & \text{if } i < j, \\[4pt] \underline{[i]} \cdot \underline{[j]} \cdot\underline{t_{i,j}} =\underline{t_{i,j}} \cdot \underline{[i]} \cdot \underline{[j]} & \text{if } i < j,\\[4pt] \underline{[i]} \cdot \underline{t_{j,k}} = \underline{t_{j,k}} \cdot \underline{[i]}  & \text{if } |\{i,j,k\}| = 3, \ j < k \\[4pt] \underline{t_{i,j}} \cdot \underline{t_{k,\ell}} = \underline{t_{k,\ell}} \cdot \underline{t_{i,j}}  & \text{if } |\{i,j,k,\ell\}| = 4, \ i < j, \ k < \ell. \end{array} \,\right\rangle\]
		Note that $\Delta:= \underline{[1]} \cdot \underline{[2]} \cdots \underline{[n]}$, which is in fact $\underline{w_0}$, is central. This follows by definition of the interval group as $w_0$ is central in $B_n$ hence commutes with every reflection in $B_n$, but it can also be checked easily using the above presentation. We can thus add it as a generator, and replace $\underline{[n]}$ by $\underline{[n-1]}^{-1} \cdots \underline{[2]}^{-1} \cdot \underline{[1]}^{-1} \cdot \Delta$. This replacement only adds the relations $\Delta x = x \Delta$ for every other generator $x$, all other added relations are redundant. One can thus factor a copy of $\mathbb{Z}$ generated by $\Delta$ and obtain the claimed presentation. 
	\end{proof}
	
	\begin{exple}\label{ex_bn}
		For $n=3$, setting $a=\underline{[1]}, b=\underline{[2]}, d=\underline{t_{1,2}}, c=\underline{t_{1,3}}, e=\underline{t_{2,3}}$ the group in Proposition~\ref{prop_bn_clean} becomes 	\[\mathbb{Z} \times \Biggl\langle 
		\begin{array}{l|cl}
			& ab=ba, \\
			a,b,c,d,e & cb=bc,  & abd=dab \\
			& ae=ea,  &                                             
		\end{array}
		\Biggr\rangle.\]
	\end{exple}
	
	\subsection{Types $D_4$ and $H_3$}

	\subsubsection{Type $D_4$}\label{d4}
	
	Recall that the Coxeter group of type $D_n$ can be realized as a reflection subgroup of index $2$ of $B_n$. The combinatorics introduced in Section~\ref{sub_bn} can thus be used for type $D_n$ as well, and we will keep the notation introduced there. Moreover, when a Coxeter group $W$ is finite, the reflection length in a reflection subgroup $W'$ is equal to the reflection length in $W$; that is, denoting $T$ (respectively $T' = W' \cap T$) the set of reflections in $W$ (respectively $W'$), for all $w\in W'$ we have $\ell_{T'}(w)=\ell_T(w)$ (see for instance~\cite[Lemma~3.10]{Gobet_cycle}). 
	
	In the notation of Section~\ref{sub_bn}, the longest element of $D_4$ is equal to the longest element $w_0$ of $B_4$, given by $[1][2][3][4]$, and since $w_0$ is central, by Proposition~\ref{prop:invols} the parabolic closure of $w_0$ is $D_4$. But there is a significant difference with the situation in type $B_n$: in the proof of Lemma~\ref{lem_rel_2_bn}, we showed that if $W$ is of type $B_n$ and $w_0$ is its longest element, then every two $T$-reduced expressions of $u \in [1, w_0]_T$ can be related by a sequence of application of relations where only two successive letters are modified at each step, leading to Presentation~\eqref{brut2_bn}. This does not hold in $D_n$ in general. Indeed, in $D_4$ the set of reflections is given by $\{t_{i,j} \ \vert \ 1 \leq i < j \leq 4\} \cup \{ r_{i,j} \ \vert \ 1 \leq i < j \leq 4\}$. By Proposition~\ref{prop:invols}, the interval $[1, w_0]_T$ in $D_4$ consists of all involutions in $D_4$. The involutions in $D_4$ which have reflection length equal to $2$ are either of the form $t_{i,j} r_{i,j}$ or $u_{i,j} v_{k, \ell}$ with $|\{i,j,k,l\}|=4$ and $u, v\in\{t,r\}$, and unlike in type $B_n$, the only relations arising from those elements are commutation relations. The two $T$-reduced expressions of $w_0$ $$ t_{1, 2} r_{1, 2} t_{3, 4} r_{3, 4} \ \text{and} \ t_{1, 3} r_{1, 3} t_{2, 4} r_{2, 4}$$ cannot be related using only commutation relations since they are made up of different factors. As a consequence, relations with both sides of length $4$ will persist in the presentation of $M([1,w_0]_T)$. We have:
	
	\begin{lemma}\label{lem_d4_1}
		The monoid $M([1, w_0]_T)$ admits the presentation
		\begin{equation*}
			\left\langle
			\begin{array}{c}
				\underline{r_{i,j}},\,\underline{t_{i,j}} \\[2pt]
				1\le i<j\le 4
			\end{array}
			\ \middle|\
			\begin{array}{l}
				\underline{t_{i,j}}\cdot\underline{r_{i,j}}
				= \underline{r_{i,j}}\cdot \underline{t_{i,j}}, \quad \forall 1 \leq i < j \leq 4,\\[6pt]
				\underline{u_{i,j}}\cdot\underline{v_{k,\ell}}
				= \underline{v_{k,\ell}}\cdot\underline{u_{i,j}},\quad \text{if } u, v\in \{t, r\}\text{~and~} |\{i,j,k,\ell\}|=4, \\[6pt]
				\underline{t_{i,j}}\cdot\underline{r_{i,j}}\cdot
				\underline{t_{k,\ell}}\cdot\underline{r_{k,\ell}}
				=\underline{t_{a,b}}\cdot\underline{r_{a,b}}\cdot
				\underline{t_{c,d}}\cdot\underline{r_{c,d}},\quad \text{if } 
				|\{i,j,k,\ell\}|=|\{a,b,c,d\}|=4.
			\end{array}
			\right\rangle .
		\end{equation*}
	\end{lemma}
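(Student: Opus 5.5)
The plan mirrors the proof of Lemma~\ref{lem_rel_2_bn}. By the same argument as in type $B_n$, $M([1,w_0]_T)$ is generated by the copy $\mathbf{T}$ of the reflections of $D_4$, and it has the presentation~\eqref{brut_bn}, where now $T=\{t_{i,j}, r_{i,j}\}$ and $P_{w_0}$ is the set of involutions of $D_4$. The task is to show that any two $T$-reduced expressions of an involution $u\in D_4$ are related by commutation moves together with the length-$4$ moves listed in the statement, each applied to consecutive factors. Conversely, every listed relation is a defining relation of~\eqref{brut_bn}, since both sides are $T$-reduced expressions of the same involution. For the length-$4$ moves this involution is $t_{i,j}r_{i,j}t_{k,\ell}r_{k,\ell}=[1][2][3][4]=w_0$.

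First I will describe $T$-reduced expressions of involutions in $D_4$. There are no balanced reflections in $D_4$, so the canonical form~\eqref{form_ref_bn} of an involution $u\in D_4$ still holds inside $B_4$, with $|A|$ even because $u\in D_4$. Now take a $T$-reduced expression of $u$ in $D_4$. Since reflection length in $D_4$ agrees with that in $B_4$, it is also $T$-reduced in $B_4$, and its factors pairwise commute by Proposition~\ref{prop:invols}. The argument of Lemma~\ref{lem_rel_2_bn} then applies verbatim with $p=0$. Up to commutation, the expression is
\[
t_{e_1,f_1}r_{e_1,f_1}\cdots t_{e_m,f_m}r_{e_m,f_m}\prod_j t_{a_j,b_j}\prod_j r_{c_j,d_j},
\]
where $\{e_1,f_1\},\dots,\{e_m,f_m\}$ is a partition of $A$ into pairs. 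The last two products are determined by $u$, so the only freedom is the choice of the pairing of $A$.

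Since $n=4$ and $|A|$ is even, $|A|\in\{0,2,4\}$. If $|A|\le 2$ the pairing is unique, so any two reduced expressions differ only by commutations. Each commutation of adjacent factors is one of the relations in the first two lines, because commuting distinct reflections in $D_4$ either have disjoint supports or form a pair $\{t_{i,j},r_{i,j}\}$. If $|A|=4$, then $u=w_0$ and there are three pairings of $\{1,2,3,4\}$. Up to commutation, any reduced expression has the form $t_{i,j}r_{i,j}t_{k,\ell}r_{k,\ell}$. Two such forms for different pairings are related by one relation of the third line, applied after suitable commutations.

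It remains to make sure that reordering within a fixed pairing, including swapping $t$ and $r$ or the two blocks, is achieved by adjacent commutations. This is the standard fact that any two words differing by a permutation of pairwise commuting letters are related by adjacent transpositions. I expect the main care point to be the claim that the $B_n$ argument transfers to $D_4$, specifically that $T'$-reducedness in $D_4$ matches $T$-reducedness in $B_4$. This is exactly where the equality $\ell_{T'}=\ell_T$ recalled above is used.
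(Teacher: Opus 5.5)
Your proposal is correct and follows essentially the paper's proof: both reuse the normal form~\eqref{eq_invol_red_bn} from the type $B_n$ argument with $p=0$ (justified by $\ell_{T'}=\ell_T$). Both then observe that only $w_0$, through its three pairings of $\{1,2,3,4\}$, contributes relations beyond commutations. The only cosmetic difference is that you split cases by $|A|\in\{0,2,4\}$, whereas the paper splits by reflection length $2$, $3$, $4$.
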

	\begin{proof}
		Since $D_4$ is a reflection subgroup of $B_4$ and the reflection length stays unchanged in a reflection subgroup, we have $[1, w_0]_{T'} \subseteq [1, w_0]_T$; here $T'$ (respectively $T$) denotes the set of reflections of $D_4$ (resp. of $B_4$). We can reuse some facts established in the proof of Lemma~\ref{lem_rel_2_bn}. We proved that every $T$-reduced expression of an involution in $B_n$ is of the form~\eqref{eq_invol_red_bn}, and those which are reduced $T$-expressions of elements of $D_n$ with every factor in $D_n$ (that is, $T'$-reduced expressions) are those for which $p=0$. Note that the element of $B_n$ represented by the word~\eqref{eq_invol_red_bn} is given by $$ \prod_{i\in A} [i] \prod_{j=1}^k (a_j, b_j)(-a_j, -b_j) \prod_{j=1}^{k'} (c_j, -d_j)(-c_j, d_j).$$
		
		As in type $B_n$, we can work out a presentation of $M([1, w_0]_T)$ of the form~\eqref{brut_bn}, obtained by listing all the elements of $P_{w_0}$ and for every such element, equating all its lifted $T'$-reduced expressions. 
		
		Involutions of reflection length $2$ are, as already observed, those with a $T'$-reduced expression of the form $t_{i,j} r_{i,j}$ or $u_{i,j} v_{k,\ell}$, with $\{i,j,k,\ell\}=\{1,2,3,4\}$ and $u, v\in \{t, r\}$. Such elements only have one other $T'$-reduced expression, obtained by exchanging the two factors of the one given above. Hence elements of reflection length $2$ only produce commutation relations in the interval monoid, and those correspond to the relations in the first two lines of the presentation above.
		
		Involutions of reflection length $3$ are, since $n=4$, necessarily of the form $[i] [j] (k, \pm \ell )(- k, \mp \ell )$, with $\{i,j,k,\ell\}=\{1, 2, 3, 4\}$. Every $T'$-reduced expression of such an element is necessarily, up to commutation, of the form $t_{i,j} r_{i,j} u_{k, \ell}$. All the commutation relations required already arise from elements of reflection length $2$, hence involutions of reflection length $3$ do not contribute any new relation. 
		
		Finally, the only involution of reflection length $4$ is $w_0 = [1][2][3][4]$. Every $T'$-reduced expression of such an element is, up to commutation, of the form $t_{i,j} r_{i,j} t_{k,\ell} r_{k,\ell}$, with $\{i,j,k,\ell\} = \{1, 2, 3, 4\}$. The commutation relations already arose from elements of reflection length $2$, hence the only relations to add are those equating two $T'$-reduced expressions of $w_0$ as above, yielding the last line of relations. 
	\end{proof}
	
	\begin{cor}
		The group $G([1, w_0]_T)$ is isomorphic to $\mathbb{Z} \times \mathrm{RAAG}(\triangle ~ \triangle ~ \triangle)$.
	\end{cor}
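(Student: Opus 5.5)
Starting from the presentation of Lemma~\ref{lem_d4_1}, read as a group presentation of $G([1,w_0]_T)$, the plan is to introduce the three elements
\[
x_{12} := \underline{t_{1,2}}\cdot\underline{r_{1,2}}\cdot\underline{t_{3,4}}\cdot\underline{r_{3,4}}, \quad x_{13}, \quad x_{14},
\]
where $x_{13}, x_{14}$ are defined in the same way for the other two perfect matchings $\{1,3\}\{2,4\}$ and $\{1,4\}\{2,3\}$ of $\{1,2,3,4\}$. The last line of relations says exactly that $x_{12}=x_{13}=x_{14}$, and this common value is $\Delta=\underline{w_0}$, which is central in the group since $w_0$ is central in $D_4$ (or directly, since it is the Garside element and conjugation by it fixes every generator, because $w_0 t w_0^{-1} = t$).

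The key step is to eliminate generators. For each of the $6$ pairs $\{i,j\}$, set $p_{i,j}:=\underline{t_{i,j}}\cdot\underline{r_{i,j}}$. For every matching $\{\{i,j\},\{k,\ell\}\}$, the second-line commutation relations show that $\underline{t_{i,j}},\underline{r_{i,j}}$ commute with $\underline{t_{k,\ell}},\underline{r_{k,\ell}}$. Hence $\Delta = p_{i,j}p_{k,\ell}$, so $p_{k,\ell} = p_{i,j}^{-1}\Delta$. I would then remove $\underline{r_{i,j}} = \underline{t_{i,j}}^{-1}p_{i,j}$ for all six pairs, and $p_{k,\ell}$ for the three pairs $\{k,\ell\}\in\{\{3,4\},\{2,4\},\{2,3\}\}$.

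After these eliminations the generators are $\Delta$ together with, for each matching $m$, the three elements $a_m:=\underline{t_{i,j}}$, $b_m:=\underline{t_{k,\ell}}$ and $c_m:=p_{i,j}$. The relations should become:
\begin{itemize}
\item $\Delta$ is central;
\item $a_m, b_m, c_m$ pairwise commute within a single matching. Here $t_{i,j}$ commutes with $r_{i,j}$, so $a_m$ commutes with $c_m$; the four elements $\underline{t_{i,j}},\underline{r_{i,j}},\underline{t_{k,\ell}},\underline{r_{k,\ell}}$ commute across the two blocks; and $\underline{t_{k,\ell}}$ commutes with $\underline{r_{k,\ell}}$, hence with $p_{k,\ell}=c_m^{-1}\Delta$.
\end{itemize}
There are no relations between distinct matchings, since every relation in Lemma~\ref{lem_d4_1} involves indices from a single matching. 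The commuting triple of each matching gives a triangle, and the three matchings give three disjoint triangles. The group is therefore $\mathbb{Z}\times\mathrm{RAAG}(\triangle~\triangle~\triangle)$. The count is consistent: $12$ original generators, minus $3$ for the identification to $\Delta$, minus $\dots$, gives $9+1$ generators.

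The main obstacle is checking carefully, via Tietze transformations, that every original relation becomes a consequence of the claimed ones and that no extra relations survive. In particular one must verify that the relation $\underline{t_{k,\ell}}\underline{r_{k,\ell}}=\underline{r_{k,\ell}}\underline{t_{k,\ell}}$ translates into $b_m$ commuting with $c_m^{-1}\Delta$, which is equivalent to $b_m$ commuting with $c_m$ once $\Delta$ is known to be central. One must also confirm that centrality of $\Delta$ follows from the presentation, which I would check generator by generator.
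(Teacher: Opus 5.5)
Your proposal is correct and is essentially the paper's proof. You adjoin the central element $\Delta=\underline{w_0}$, use the length-$4$ relations to eliminate the three generators attached to the pairs $\{2,3\},\{2,4\},\{3,4\}$, and are left with three pairwise-commuting triples plus $\Delta$. The only difference is that you take $p_{1,j}=\underline{t_{1,j}}\cdot\underline{r_{1,j}}$ instead of $\underline{r_{1,j}}$ as the third vertex of each triangle. This is an inessential change of variables, since $\underline{t_{1,j}}$ and $\underline{r_{1,j}}$ commute.

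One small correction: the length-$4$ relations of Lemma~\ref{lem_d4_1} do involve two different matchings. So your sentence that ``every relation involves indices from a single matching'' only holds after those relations have been absorbed by the identification $x_{12}=x_{13}=x_{14}=\Delta$. After that identification they become trivial; for example $p_{1,2}\,p_{3,4}=\Delta$ turns into $\Delta=\Delta$.
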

	
	\begin{proof}
		The group $G([1, w_0]_T)$ has the same presentation as $M([1, w_0]_T)$. We start from the presentation obtained in Lemma~\ref{lem_d4_1}. Setting $\Delta:=\underline{t_{i,j}}\cdot\underline{r_{i,j}}\cdot
		\underline{t_{k,\ell}}\cdot\underline{r_{k,\ell}}$ (where $\{i,j,k,\ell\}=\{1,2,3,4\}$), we get that $\Delta$ is central (this also follows from the fact that it is equal to $\underline{w_0}$, that $t w_0 = w_0 t$ for every $t\in T'$ by Proposition~\ref{prop:invols}, and by construction of the interval monoid). Since $\Delta = \underline{t_{i,j}}\cdot\underline{r_{i,j}}\cdot
		\underline{t_{k,\ell}}\cdot\underline{r_{k,\ell}}$ for \textit{every} choice of $i,j,k,\ell$ such that $\{i,j,k,\ell\}=\{1,2,3,4\}$, by adding $\Delta$ as generator we can get rid of three of the six $r_{i,j}$'s, say $\underline{r_{2, 3}}, \underline{r_{2, 4}}$ and $\underline{r_{3, 4}}$ as $ \underline{r_{k, \ell}} = \underline{t_{k,\ell}}^{-1} \underline{r_{i,j}}^{-1}\underline{t_{i,j}}^{-1} \Delta$. One checks that adding $\Delta$ as generator and removing $\underline{r_{2, 3}}, \underline{r_{2, 4}}$ and $\underline{r_{3, 4}}$ yields a presentation which, after simplification, removes every relation in which one of the generators $\underline{r_{2, 3}}, \underline{r_{2, 4}}$ and $\underline{r_{3, 4}}$ is involved and just adds the relation $\Delta a = a \Delta$ for every other generator $a$. This new presentation has relations 
		\begin{align*}
			\underline{t_{1, 2}} \cdot \underline{t_{3, 4}}= \underline{t_{3, 4}} \cdot \underline{t_{1, 2}}, \ \underline{t_{1, 2}} \cdot \underline{r_{1, 2}} = \underline{r_{1, 2}} \cdot \underline{t_{1, 2}}, \ \underline{t_{3, 4}} \cdot \underline{r_{1, 2}} = \underline{r_{1, 2}} \cdot \underline{t_{3, 4}}, \\
			\underline{t_{1, 3}} \cdot \underline{t_{2, 4}} = \underline{t_{2, 4}} \cdot \underline{t_{1, 3}}, \ \underline{t_{1, 3}} \cdot \underline{r_{1, 3}} = \underline{r_{1, 3}} \cdot \underline{t_{1, 3}}, \ \underline{t_{2, 4}} \cdot \underline{r_{1, 3}} = \underline{r_{1, 3}} \cdot \underline{t_{2, 4}}, \\
			\underline{t_{1, 4}} \cdot \underline{t_{2, 3}} = \underline{t_{2, 3}} \cdot \underline{t_{1, 4}}, \ \underline{t_{1, 4}} \cdot \underline{r_{1, 4}} = \underline{r_{1, 4}} \cdot \underline{t_{1, 4}}, \ \underline{t_{2, 3}} \cdot \underline{r_{1, 4}} = \underline{r_{1, 4}} \cdot \underline{t_{2, 3}}, \\
			\Delta a = a \Delta, \ \forall a\in \{ \underline{t_{1, 2}}, \underline{t_{1, 3}}, \underline{t_{1, 4}}, \underline{t_{2, 3}}, \underline{t_{2, 4}}, \underline{t_{3, 4}}, \underline{r_{1, 2}}, \underline{r_{1, 3}}, \underline{r_{1, 4}}\}.       
		\end{align*}
		This exactly yields the claimed presentation, with $\Delta$ as generator of $\mathbb{Z}$ and each line above except the last one corresponding to a triangle. 
	\end{proof}
	
	\subsubsection{Type $H_3$}
	
	In type $H_3$ the longest element $w_0$ is central and has reflection length equal to $3$. Note that in this case, the lattice property of $[1,w_0]_T$, which follows from Theorem~\ref{thm_gob_class}, is also a consequence of~\cite[Theorem 2.2]{Gob_max}. We thus have by Proposition~\ref{prop:invols} that the parabolic closure of $w_0$ is the whole group $H_3$ and by Proposition~\ref{prop:invols} we have $t \leq_T w_0$ for every reflection $t$. There are $15$ reflections in $H_3$. 
	
	Let $w = t_1 t_2$ be an involution of reflection length $2$, where $t_i\in T$. We have $t_1 t_2 = t_2 t_1$, and by Remark~\ref{rmq:parab_clos}, the parabolic closure of $t_1t_2$ is necessarily of type $A_1 \times A_1$, since it contains $t_1$ and $t_2$ (because $t_i \leq_T t_1 t_2$) and $A_1 \times A_1$ is the only class of rank two parabolic subgroups of $H_3$ which admit distinct and commuting reflections (the others rank two parabolic subgroups are of type $A_2$ or $I_2(5)$). It follows that the only relations arising from elements of reflection length $2$ are commutation relations: if $t_1 t_2$ was equal to $t_3 t_4$ with $t_1, t_2, t_3, t_4$ all distinct, then they would lie in the same rank $2$ parabolic subgroup (the parabolic closure of $t_1 t_2$), which is impossible. 
	
	Given $t_1\in T$, and letting $x\in W$ such that $t_1 x = w_0$, we then have that $x$ is an involution of reflection length $2$, because by Proposition~\ref{prop:invols} we have $t\leq_T w_0$ for every $t\in T$ and every element below $w_0$ for $\leq_T$ is an involution. There are thus $t_2, t_3\in T$ such that $w_0 = t_1 t_2 t_3$, and by what we observed in the previous paragraph, the pair $\{t_2, t_3\}$ is uniquely determined by $x$. Moreover, by Proposition~\ref{prop:invols} every $T$-reduced expression of $w_0$ is made of pairwise commuting reflections. Up to commutation of the letters, there are thus five $T$-reduced expressions of $w_0$, since $|T|=15$. We shall write $t_1, t_2, \dots, t_{15}$ the reflections of $W$, in such a way that for all $ 0 \leq k \leq 4$, we have $w_0 = t_{1+3k} t_{2+3k} t_{3+3k}$. We thus get the following: 
	
	\begin{lemma}\label{h3_brut} 
		Setting $I=\{\{1, 2, 3\}, \{4, 5, 6\}, \{7,8,9\}, \{10, 11, 12\}, \{13, 14, 15\}\},$ the monoid $M([1, w_0]_T)$ admits the presentation 
		\begin{equation*}
			\left\langle
			\begin{array}{c}
				\underline{t_{i}},\ i=1, \dots, 15
			\end{array}
			\ \middle|\
			\begin{array}{l}
				\underline{t_{i}}\cdot\underline{t_{j}}
				= \underline{t_{j}}\cdot \underline{t_{i}}, \quad \text{if }\exists S \in I \ \text{such that} \ i, j\in S,\\[6pt]
				\underline{t_{i}}\cdot\underline{t_{j}} \cdot\underline{t_k}
				= \underline{t_{\ell}}\cdot \underline{t_{p}} \cdot \underline{t_q}, \quad \text{if } \{i,j,k\} \in I, \{\ell, p, q\} \in I.
			\end{array}
			\right\rangle .
		\end{equation*}
		
	\end{lemma}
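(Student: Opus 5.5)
The plan is to follow the same strategy as in the proof of Lemma~\ref{lem_d4_1}. We start from the general presentation of the form~\eqref{brut_bn}: generators $\mathbf{T}=\{\underline{t}\mid t\in T\}$, which generate $M([1,w_0]_T)$ because every reflection lies below $w_0$ (Proposition~\ref{prop:invols}), and relations equating the lifts of any two $T$-reduced expressions of a common element $u\in[1,w_0]_T$. By Proposition~\ref{prop:invols}, these elements $u$ are exactly the involutions of $W$, and they have reflection length $0,1,2$ or $3$. We then sort the relations by $\ell_T(u)$ and check that each of them follows from the two families stated in the lemma.

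Lengths $0$ and $1$ give no relations. For $\ell_T(u)=2$, the discussion preceding the lemma shows two things. If $u=t_1t_2=t_3t_4$ are two $T$-reduced expressions, then $\{t_3,t_4\}=\{t_1,t_2\}$, because the parabolic closure is of type $A_1\times A_1$ and so contains only two reflections (Remark~\ref{rmq:parab_clos}). Hence the only relations here are $\underline{t_1}\cdot\underline{t_2}=\underline{t_2}\cdot\underline{t_1}$ for commuting distinct reflections $t_1,t_2$ whose product lies below $w_0$. We must then see that such a pair lies in a common triple of $I$. Since $t_1t_2\leq_T w_0$ and $\ell_T(w_0)=3$, we can write $w_0=t_1t_2t$ for some reflection $t$. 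The complement $t_2t$ of $t_1$ is determined by $t_1$, and as recalled before the lemma it determines the pair $\{t_2,t\}$. So $\{t_1,t_2,t\}$ is the triple of $I$ containing $t_1$. Thus the length-$2$ relations are exactly the first family.

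For $\ell_T(u)=3$, we have $u=w_0$, because the rank of $P(u)$ equals $\ell_T(u)=3$ and $P(u)$ is parabolic, so $P(u)=W$ and $u=w_0$ by~\cite[Corollary 3.5]{Gobet_invol}. Any $T$-reduced expression $t_it_jt_k$ of $w_0$ consists of pairwise commuting reflections forming one of the five triples of $I$, namely the one determined by $t_i$. Any two such expressions can therefore be related as follows: first permute the letters of each expression into increasing order using the commutation relations inside its triple, then equate the two increasing words via the second family. Conversely, every relation listed in the lemma is one of the relations of~\eqref{brut_bn}, so the two presentations coincide.

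The main obstacle I expect is the bookkeeping justifying that the partition $I$ is well defined: each reflection lies in exactly one triple, and any commuting pair appearing in a length-$2$ relation lies in a common triple. This rests on the uniqueness statement already established before the lemma. I would state explicitly that the map $t\mapsto tw_0$ sends $t$ to a length-$2$ involution with a unique pair of factors, so that the triples are the classes of a partition of the $15$ reflections into $5$ blocks.
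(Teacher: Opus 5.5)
Your proof is correct and follows essentially the same route as the paper: the lemma there is read off from the preceding discussion. That discussion shows that length-$2$ involutions have parabolic closure of type $A_1\times A_1$ and so give only commutation relations, and that each reflection has a uniquely determined complementary commuting pair, producing the five triples; your checks that commuting pairs lie in a common triple and that $I$ is a genuine partition are details the paper leaves implicit. One simplification for the length-$3$ case: there is no need to pass through parabolic closures and \cite[Corollary 3.5]{Gobet_invol}, since $u\leq_T w_0$ together with $\ell_T(u)=\ell_T(w_0)$ already forces $u=w_0$.
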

	
	As in the previously studied cases $B_n$ and $D_4$, when passing to the group of fractions we can remove some of the generators. 
	
	\begin{cor}
		The group $G([1, w_0]_T)$ is isomorphic to $\mathbb{Z} \times \mathrm{RAAG}( ~| ~| ~ | ~ | ~ |~)$.
	\end{cor}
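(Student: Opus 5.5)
We start from the presentation of Lemma~\ref{h3_brut}, which is also a presentation of $G([1,w_0]_T)$ since a monoid presentation of a Garside monoid yields a presentation of its group of fractions. Set $\Delta:=\underline{t_1}\cdot\underline{t_2}\cdot\underline{t_3}$. By the last family of relations, $\Delta=\underline{t_{1+3k}}\cdot\underline{t_{2+3k}}\cdot\underline{t_{3+3k}}$ for every $k=0,\dots,4$, and moreover, by the commutation relations inside each triple, $\Delta$ equals any product of the three letters of a given triple taken in any order. It follows that $\Delta$ is central. This can also be seen directly from $\Delta=\underline{w_0}$, the centrality of $w_0$ in $H_3$, and the construction of the interval monoid. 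Alternatively, one checks it on the generators: for $t_i$ in the triple $\{t_{1+3k},t_{2+3k},t_{3+3k}\}$, write $\Delta$ as a product starting with $\underline{t_i}$ and also as one ending with it.

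We add $\Delta$ as a generator and, for each $k$, eliminate $\underline{t_{3+3k}}$ via $\underline{t_{3+3k}}=\underline{t_{2+3k}}^{-1}\cdot\underline{t_{1+3k}}^{-1}\cdot\Delta$. We then rewrite the relations in the remaining $10$ generators $x_k:=\underline{t_{1+3k}}$, $y_k:=\underline{t_{2+3k}}$ ($k=0,\dots,4$) together with $\Delta$.

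\begin{itemize}
\item The length-$3$ relations become $x_ky_k(y_k^{-1}x_k^{-1}\Delta)=x_ly_l(y_l^{-1}x_l^{-1}\Delta)$, which are trivial.
\item The relation $x_ky_k=y_kx_k$ survives.
\item The commutation of $\underline{t_{3+3k}}$ with $x_k$ and with $y_k$ is equivalent, given $x_ky_k=y_kx_k$, to the commutation of $\Delta$ with $x_k$ and $y_k$.
\item The remaining orderings of each triple (for instance $\Delta=\underline{t_{2+3k}}\cdot\underline{t_{1+3k}}\cdot\underline{t_{3+3k}}$) follow from these.
\end{itemize}

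The main bookkeeping obstacle is to make sure that centrality of $\Delta$ is genuinely recovered and that no extra relation appears. To handle this, we rely on the fact that the original presentation already implies $\Delta x=x\Delta$ for all generators. We then verify that the final presentation consists exactly of the relations $x_ky_k=y_kx_k$ for $k=0,\dots,4$, together with $\Delta x_k=x_k\Delta$ and $\Delta y_k=y_k\Delta$. Each of these is either an original relation after substitution or a consequence of such relations, and conversely each original relation is recovered from them.

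This presentation is $\mathbb{Z}\times\mathrm{RAAG}(\mathsf{Gr})$, where $\mathbb{Z}$ is generated by $\Delta$ and $\mathsf{Gr}$ has vertices $x_k,y_k$ and exactly the five edges $\{x_k,y_k\}$. In other words, $\mathsf{Gr}$ is five disjoint edges, so the group is $\mathrm{RAAG}(~|~|~|~|~|~)\cong(\mathbb{Z}^2)^{*5}$, as claimed.
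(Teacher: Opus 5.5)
Your proposal is correct and follows the paper's proof: add $\Delta=\underline{t_1}\cdot\underline{t_2}\cdot\underline{t_3}$ as a generator, eliminate $\underline{t_{3+3k}}$ for $k=0,\dots,4$, and observe that the surviving relations are exactly the five commutations $\underline{t_{1+3k}}\cdot\underline{t_{2+3k}}=\underline{t_{2+3k}}\cdot\underline{t_{1+3k}}$ together with centrality of $\Delta$. You also spell out the Tietze bookkeeping that the paper leaves to ``simplifying the presentation'', in particular why the commutations involving $\underline{t_{3+3k}}$ are equivalent to $\Delta$ commuting with $\underline{t_{1+3k}}$ and $\underline{t_{2+3k}}$.
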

	
	\begin{proof}
		The group $G([1, w_0]_T)$ has the same presentation as $M([1, w_0]_T)$. Starting from the presentation obtained in Lemma~\ref{h3_brut}, if we add the generator $\Delta = \underline{t_1} \cdot \underline{t_2} \cdot \underline{t_3}$, we can get rid of the generators $\underline{t_i}$ for $i = 3 + 3k$, where $0 \leq k \leq 4$, since $\underline{t_i} = \underline{t_{i-1}}^{-1} \cdot \underline{t_{i-2}}^{-1} \cdot \Delta$. Removing them and adding $\Delta$, simplifying the presentation and setting $J = \{1, 4, 7, 10, 13\}$, we get the presentation 
		\begin{equation*}
			\left\langle
			\begin{array}{c}
				\underline{t_i}, \underline{t_{i+1}}, \ i \in J
			\end{array}
			\ \middle|\
			\begin{array}{l}
				\Delta \cdot \underline{t_i} = \underline{t_i } \cdot \Delta, \ \forall i \in J \cup (J+1),\\[6pt]
				\underline{t_{i}}\cdot\underline{t_{i+1}}= \underline{t_{i+1}}\cdot \underline{t_{i}} , \quad \forall i \in J.
			\end{array}
			\right\rangle ,
		\end{equation*}
		which is isomorphic to $\mathbb{Z} \times \mathrm{RAAG}( ~| ~| ~ | ~ | ~ |~)$. 
	\end{proof}

	\section{Some examples and counter-examples in complex reflection groups}\label{sec:exple}
	
	\subsection{Some general observations}
	
	It is natural to wonder whether some of the previous results admit generalizations for complex reflection groups (we refer the reader to~\cite{LT} for an introduction to such groups). Recall that a (finite) complex reflection group is a finite subgroup of $\mathrm{GL}_n(\mathbb{C})$ generated by complex reflections, that is, elements of finite order whose set of fixed points is a hyperplane of $\mathbb{C}^n$. Let $W$ be a complex reflection group and denote $\mathrm{Ref}(W)$ its set of reflections. Note that it is a priori unclear by what to replace $w_0$ in this setting, since there is no notion of "longest element" in such groups. Nevertheless, in those cases studied in the previous section, by Proposition~\ref{prop:invols} the element $w_0$ is also an element of the center of $W$. Finite complex reflection groups often admit a nontrivial center, hence one can replace $w_0$ by a generator of the center (or an arbitrary nontrivial central element). Note that such elements are not involutive in general in complex reflection groups. But by Steinberg's theorem, any involution in a complex reflection group is always central in a parabolic subgroup: indeed, given $W$ a finite complex reflection group acting on $V$ and $w\in W$ an involution, we have $w \in C_W(V^w)$, which is a parabolic subgroup of $W$ acting as a reflection group on $(V^w)^\perp$, on which $w$ acts by $-\mathrm{id}$.
	
	Also, reflections may have order greater than two, which, as we shall see, will lead to very different behaviours than in the real case. In such cases, instead of taking the whole set of reflections $\mathrm{Ref}(W)$ as set of generators, one can also take the \textit{distinguished} ones; if $r\in \mathrm{Ref}(W)$ is a reflection, then denoting $H_r$ its reflecting hyperplane, the subgroup $C_W(H_r)$ is a cyclic reflection subgroup, generated by a single distinguished reflection $r'$ having $e^{2i \pi / k}$ where $k=|C_W(H_r)|$ as nontrivial eigenvalue. This subset of the set of reflections still generates the group, and is stable by conjugation. 
	
	The following lemmas will be helpful to show that in many cases, for a complex reflection group $W$ with central element $z$, the poset $[1, z]_A$ fails to be a lattice, where $A$ is either the set $\mathrm{Ref}(W)$ or the subset of distinguished reflections. They can be formulated in a quite general setting: 
	
	\begin{lemma}\label{lem_latt_fail}
		Let $G$ be a group positively generated by a set $A$ of elements which is stable by conjugation. Let $z\in Z(G)$. Assume that there are $s, t\in A$ such that 
		\begin{itemize}
			\item $st \notin A$, 
			\item $st \neq ts$,
			\item $st \leq_A z$.
		\end{itemize}
		Then $\ell_A(z) \geq 3$, and the poset $[1, z]_A$ fails to be a lattice. 
	\end{lemma}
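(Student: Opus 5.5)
The plan is to exhibit two distinct elements of $[1,z]_A$ that have two distinct minimal common upper bounds in $[1,z]_A$. The natural candidates are $st$ and $ts$.

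\emph{Preliminaries.} Since $A$ is stable by conjugation and $z$ is central, $z$ is balanced: $u\leq_A z$ if and only if $u\leq_{A,R} z$. Indeed, if $z=uv$ with lengths adding, then $z=u^{-1}zu=vu$, with the same lengths. Note also that $s,t\neq 1$, and $s\neq t$ (otherwise $st=ts$).
\begin{itemize}
\item Conjugation invariance gives $\ell_A(x)=\ell_A(gxg^{-1})$.
\item Hence $ts=s^{-1}(st)s$ satisfies $\ell_A(ts)=\ell_A(st)$.
\item We have $\ell_A(st)=2$. It is at most $2$; it is not $0$ since $st=1$ would give $t=s^{-1}$ and then $st=ts$; and it is not $1$ since $st\notin A$.
\item From $st\leq_A z$, write $z=st\,x$ with $\ell_A(z)=2+\ell_A(x)$. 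Since $z$ is central, $z=s^{-1}zs=ts\,(s^{-1}xs)$, so $ts\leq_A z$ as well.
\item If $\ell_A(z)=2$, then $x=1$ and $st=z=ts$, a contradiction. Hence $\ell_A(z)\geq 3$, which is the first claim.
\end{itemize}

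\emph{Candidate common upper bounds.} Both $s$ and $t$ lie below $st$ and below $ts$ for $\leq_A$: $s\leq_A st$ as a prefix, $t\leq_A ts$ as a prefix, and $t\leq_A st$ because $st=t\,(t^{-1}st)$ with $t^{-1}st\in A$ and length $2$. Symmetrically $s\leq_A ts$. All four elements lie in $[1,z]_A$ by transitivity. Moreover $s$ and $t$ are incomparable, since both have length $1$ and $s\neq t$. Likewise $st$ and $ts$ are incomparable, since both have length $2$ and $st\neq ts$.

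\emph{Failure of the lattice property.} Suppose $j=s\vee t$ exists in $[1,z]_A$. Then $j\leq_A st$ and $j\leq_A ts$, and $j$ lies above both $s$ and $t$, so $\ell_A(j)\geq 1$. If $\ell_A(j)=1$, then $j\in A$ and $s\leq_A j$ forces $j=s$, and similarly $j=t$, a contradiction. So $\ell_A(j)=2$, and $j\leq_A st$ forces $j=st$; similarly $j=ts$. This contradicts $st\neq ts$. Hence $[1,z]_A$ is not a lattice.

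The only delicate point is the last step: it relies on the fact that $u\leq_A v$ together with $\ell_A(u)=\ell_A(v)$ forces $u=v$. This follows directly from the definition, since then $\ell_A(u^{-1}v)=0$.
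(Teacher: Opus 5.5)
Your proof is correct and follows essentially the same route as the paper: you get $\ell_A(st)=2$, use centrality of $z$ and conjugation-stability of $A$ to pass from $st\leq_A z$ to $ts\leq_A z$, and observe that $s,t$ both lie below $st$ and $ts$. This bowtie prevents $s\vee t$ from existing. You are somewhat more explicit than the paper — you rule out $st=1$, check that $z$ is balanced, and spell out why the bowtie kills the join — but the argument is the same.
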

	
	In practice, as we shall see below, when $z$ is an element of a complex reflection group which fails to be of order $2$, it is often not difficult to find a pair $s, t$ of generators satisfying the assumptions of the lemma. 
	
	\begin{proof}
		Since $st\notin A$, we have $\ell_A(st)=2$. The condition $st \leq_A z$ ensures that $\ell_A(z) \geq 2$. If $\ell_A(z)=2$, then we have $z = st$. Since $z$ is central, this forces $z= ts$, contradicting $st \neq ts$. Hence $\ell_A(z) \geq 3$. This establishes the first claim. 
		
		Since $st \leq_A z$, there are $t_3, t_4, \dots, t_{\ell_A(z)} \in A$ such that $z = s t t_3 t_4 \cdots t_{\ell_A(z)}$. Using that $z$ is central, we can write $$ z = t t_3 t_4 \cdots t_{\ell_A(z)} s = t s (s^{-1} t_3 s) (s^{-1} t_4 s) \cdots (s^{-1} t_{\ell_A(z)} s), $$ and since $A$ is stable by conjugation, the last expression is still an $A$-reduced expression of $z$. We conclude that $ts \leq_A z$. Since $st = t (t^{-1} s t)$, we have $t \leq_A st$, $t\leq_A ts$, and similarly $s \leq_A st$, $s \leq_A ts$. This gives rise to a so-called "bowtie" in the poset $[1, z]_A$ (see Figure~\ref{fig_bowtie}; for more on bowties see~\cite{BmC}), hence the lattice property fails: $s$ and $t$ fail to have a join. 
		\begin{figure}
			\[\begin{tikzcd}
				st && ts \\
				s && t
				\arrow[no head, from=2-1, to=1-1]
				\arrow[no head, from=2-1, to=1-3]
				\arrow[no head, from=2-3, to=1-1]
				\arrow[no head, from=2-3, to=1-3]
			\end{tikzcd}\]
			\caption{Bowtie as appearing in the proof of Lemma~\ref{lem_latt_fail}.}
			\label{fig_bowtie}
		\end{figure}
	\end{proof}
	
	\begin{lemma}\label{lem_tool_failure}
		Let $G$ be a group positively generated by a set $A$ of elements which is stable by conjugation. Let $z\in Z(G)$. Assume that the elements of $A$ all have the same finite order $N$. Let $z\in Z(G)$. If the order of $z$ does not divide $N$, then $[1,z]_A$ is not a lattice. 
	\end{lemma}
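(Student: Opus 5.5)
The plan is to reduce the statement to Lemma~\ref{lem_latt_fail}. To do so, I want a pair $s,t\in A$ with $st\notin A$, $st\neq ts$ and $st\leq_A z$. Such a pair should come from an $A$-reduced expression of $z$ that contains two adjacent non-commuting letters. If no such expression exists, I expect the order of $z$ to divide $N$.

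Two preliminary facts about an $A$-reduced expression $z=a_1a_2\cdots a_k$ will be used.
\begin{enumerate}
\item Since $z$ is central, conjugating by $a_1\cdots a_{i-1}$ gives $z=a_i a_{i+1}\cdots a_k a_1\cdots a_{i-1}$. This is again an expression of length $k=\ell_A(z)$, hence $A$-reduced. So cyclic rotations of reduced expressions of $z$ are reduced expressions of $z$.
\item If two adjacent letters $a_i,a_{i+1}$ commute, swapping them gives another $A$-reduced expression of $z$.
\end{enumerate}
Moreover, in any reduced expression, the product $a_ia_{i+1}$ of adjacent letters is not in $A$. Otherwise $z$ would have an $A$-expression of length $k-1$.

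Next I dispose of the trivial cases. If $k=0$ then $z=1$, and if $k=1$ then $z\in A$. In both cases the order of $z$ divides $N$, contrary to the hypothesis, so $k\geq 2$.

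Now suppose, aiming for a contradiction, that every $A$-reduced expression of $z$ reachable from $a_1\cdots a_k$ by rotations and commuting swaps has all its adjacent letters commuting. I claim that then all the $a_j$ pairwise commute. To see this, fix $j\geq 3$ and move $a_j$ leftwards by successive swaps with $a_{j-1},a_{j-2},\dots,a_2$. Each swap is legitimate, because by hypothesis adjacent letters in each intermediate reduced expression commute, and each intermediate word is again reduced by (2). After these swaps $a_j$ sits next to $a_1$, so $a_j$ commutes with $a_1$. Rotating via (1) lets us apply the same argument with any letter in the role of $a_1$, so all letters commute. Then, since every $a_j$ has order $N$,
\[
z^N=a_1^N\cdots a_k^N=1,
\]
which contradicts the assumption that the order of $z$ does not divide $N$.

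Hence some reduced expression of $z$ has adjacent letters $a_i,a_{i+1}$ with $a_ia_{i+1}\neq a_{i+1}a_i$. Rotating it via (1) makes $a_ia_{i+1}$ a prefix of a reduced expression of $z$, so $a_ia_{i+1}\leq_A z$. By the preliminary remark, $a_ia_{i+1}\notin A$. Setting $s=a_i$ and $t=a_{i+1}$, all three hypotheses of Lemma~\ref{lem_latt_fail} hold, and that lemma shows $[1,z]_A$ is not a lattice.

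The only delicate step is the bubbling argument: the hypothesis "adjacent letters commute" has to hold in every intermediate expression, not just the original one. I handle this by formulating the contradiction hypothesis for the whole class of reduced expressions generated by rotations and commuting swaps, rather than for a single expression.
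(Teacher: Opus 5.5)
Your proof is correct and follows the paper's strategy: if the letters of an $A$-reduced expression of $z$ pairwise commute, then $z^N=1$; otherwise you find a non-commuting pair $s,t$ with $st\leq_A z$ and $st\notin A$, and apply Lemma~\ref{lem_latt_fail}. The only difference is technical. The paper takes an arbitrary non-commuting pair $t_i,t_j$ and brings it to the front by conjugating the intermediate letters past $t_j$, which uses conjugation-stability of $A$, and then rotates. You instead reach an adjacent non-commuting pair using only commuting swaps and cyclic rotations, which is slightly longer but equally valid.
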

	
	\begin{proof}
		Let $t_1 t_2 \cdots t_{\ell_A(z)}$ be an $A$-reduced expression of $z$. If $t_i t_j = t_j t_i$ for all $i, j$, then $z$ has order dividing $N$, which is impossible. Hence there are $i, j$, $i<j$ such that $t_i t_j \neq t_j t_i$. Moving $t_i$ and $t_j$ to the left of the $A$-reduced expression (as we did for $s$ in the proof of Lemma~\ref{lem_latt_fail}), we get that $t_i t_j \leq_A z$. Since $t_i t_j \notin A$, the assumptions of Lemma~\ref{lem_latt_fail} are fulfilled with $s= t_i$ and $t=t_j$. 
	\end{proof}

	\subsection{Examples and counter-examples}
	
	We provide examples and counter-examples to the lattice property of $[1, z]_A$ for some complex reflection groups $W$ with $z\in Z(W)$ and $A \subseteq \mathrm{Ref}(W)$ stable by conjugation. Many properties used can be checked by computer (for instance using GAP) but when short explanations are possible without recourse to the computer we chose give them. When we obtain a lattice we also describe the attached interval Garside group. 
	
	It turns out that Lemma~\ref{lem_tool_failure} allows one to rule out many cases where the lattice property fails among irreducible complex reflection groups of rank $2$: 
	
	\begin{prop}
		Let $W$ be an irreducible complex reflection group of type $G_4$, $G_5$, $G_{13}$, $G_{16}$, $G_{20}$, $G_{22}$, and $z$ be a generator of $Z(W)$. Let $A$ denote either the set of reflections, or the set of distinguished reflections. Then $[1, z]_A$ is not a lattice. 
	\end{prop}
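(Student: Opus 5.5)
The plan is to apply Lemma~\ref{lem_tool_failure} directly, with $G=W$. Its hypotheses will be checked one group at a time, using the standard data for these rank-$2$ groups: the orders of their reflections and the order of $Z(W)$.

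First, the general hypotheses, which hold for every group in the list.
\begin{itemize}
\item Both choices of $A$ are stable under conjugation. $\mathrm{Ref}(W)$ is clearly stable, and the set of distinguished reflections is stable as recalled above, since conjugating a distinguished reflection preserves its nontrivial eigenvalue $e^{2i\pi/k}$ and sends $C_W(H_r)$ to $C_W(H_{wrw^{-1}})$.
\item $A$ positively generates $W$. Both sets generate $W$ as a group, and $W$ is finite, so every inverse $a^{-1}=a^{N-1}$ is a positive word in $A$.
\item The element $z$ is central by assumption.
\end{itemize}

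The real content is the hypothesis that all elements of $A$ share a common finite order $N$, and that $|z|=|Z(W)|$ does not divide $N$. Here are the data I will use.
\begin{itemize}
\item $G_4$: every reflection has order $3$, and this covers both $r$ and $r^2=r^{-1}$, so $N=3$ for either choice of $A$. Also $|Z(G_4)|=2$.
\item $G_5$: every reflection has order $3$, so $N=3$, and $|Z|=6$.
\item $G_{13}$: every reflection has order $2$, so $N=2$, and $|Z|=4$.
\item $G_{16}$: the reflections are the nontrivial powers of order-$5$ reflections, and all of these have order $5$. So $N=5$ for both choices of $A$, and $|Z|=10$.
\item $G_{20}$: every reflection has order $3$, so $N=3$, and $|Z|=6$.
\item $G_{22}$: every reflection has order $2$, so $N=2$, and $|Z|=4$.
\end{itemize}
In each case the order of a generator of the centre does not divide $N$. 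Lemma~\ref{lem_tool_failure} then gives directly that $[1,z]_A$ is not a lattice.

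The only real obstacle is making sure the order data is correct. Two points need care. First, in each listed group all reflection hyperplanes must have cyclic fixers of the same order, so that $N$ is well defined. Second, when $A=\mathrm{Ref}(W)$ includes non-distinguished powers of reflections, those powers must have the same order as the distinguished ones. This holds because the fixer of each hyperplane is cyclic of prime order $p$, with $p\in\{2,3,5\}$ here, so every nontrivial element of it has order exactly $p$. This primality is exactly why the list is restricted to these groups: groups such as $G_6$ or $G_8$, which mix orders or have non-prime fixers, are excluded. The orders themselves will be read from the Shephard--Todd tables (e.g.~\cite{LT}), or checked in GAP.
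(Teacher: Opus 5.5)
Your proposal is correct and is essentially the paper's proof: both apply Lemma~\ref{lem_tool_failure} with the orders of a generator of $Z(W)$ and of the reflections taken from the Shephard--Todd data (the paper's Table~\ref{table:order}). Your check that the non-distinguished powers in $\mathrm{Ref}(W)$ have the same prime order fills in a point the paper leaves implicit. Your closing heuristic is slightly off, though: primality alone does not single out this list, since $G_{12}$ also has all its reflections of order $2$ but is excluded because $|Z(G_{12})|=2$ divides $2$ (and the next proposition shows the interval is a lattice there).
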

	
	\begin{proof}
		For these groups, the assumptions of Lemma~\ref{lem_tool_failure} are fulfilled. The orders of $z$ and of the reflections are collected in Table~\ref{table:order}, and can be found in~\cite[Chapter 6]{LT}. Note that the center of an irreducible complex reflection group is cyclic (it consists of scalar matrices). 
		\begin{table}[h!]
			\centering
			\begin{tabular}{|c|c|c|}
				\hline
				Group $W$ & Order of $z$ such that $\langle z \rangle = Z(W)$  & Order of the reflections \\ \hline
				$G_4$ & $2$ & $3$ \\ \hline
				$G_5$ & $6$ & $3$ \\ \hline
				$G_{13}$ & $4$ & $2$ \\ \hline
				$G_{16}$ & $10$ & $5$ \\ \hline
				$G_{20}$ & $6$ & $3$ \\ \hline
				$G_{22}$ & $4$ & $2$ \\ \hline
			\end{tabular}
			\caption{Order of a generator of the center and of reflections of some complex reflection groups of rank $2$. }
			\label{table:order}
		\end{table}
	\end{proof}
	
	Recall that for $m, n, p \in \mathbb{Z}_{\geq 1}$ and $p|m$, the complex reflection group $G(m,p,n)$ is the group of monomial $n \times n$-matrices with entries which are $m$\textsuperscript{th} roots of unity and product of the nonzero entries equal to a $(m/p)$\textsuperscript{th} root of unity. 
	
	\begin{prop}
		Let $W$ be an irreducible complex reflection group of rank $2$ containing reflections of order $2$ and assume that $z\in Z(W)$ has order $2$ (which forces $z=-I_2$). Let $A=\mathrm{Ref}(W)$. Then $[1,z]_A$ is a lattice, and the attached interval Garside group is isomorphic to $\mathbb{Z} \times F_{|R_2|/2}$, where $R_2$ denotes the set of reflections of order $2$ of $W$. This applies in particular if $W$ is $G_{12}$, $G_{13}$, $G_{22}$ or $G(m,p,2)$ with even $m$, and $z= - I_2$.
	\end{prop}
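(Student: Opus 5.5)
Since $W$ acts irreducibly on $V=\mathbb{C}^2$ and $z$ is central of order $2$, Schur's lemma forces $z$ to be a scalar, hence $z=-I_2$. I would then determine the interval $[1,z]_A$ explicitly. First, $z\notin A$: a reflection fixes a line pointwise, whereas $-I_2$ fixes only $0$. Second, $\ell_A(z)=2$. For this, take $s\in R_2$, which acts as $-1$ on some line $L_s$ and as $1$ on a complementary line $H_s$. Then $s^{-1}z=-s$ acts as $1$ on $L_s$ and as $-1$ on $H_s$, so $-s$ is again a reflection of order $2$, i.e.\ $-s\in R_2$. Hence $z=s\cdot(-s)$, where the two factors commute and are distinct. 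Since $R_2$ is nonempty, $z$ has length $2$ and the interval has rank $2$.

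Next I would identify the atoms of $[1,z]_A$. If $r\in A$ satisfies $r\leq_A z$, then $r^{-1}z=-r^{-1}$ must lie in $A$. Suppose $r$ has order $d>2$, with nontrivial eigenvalue $\zeta\neq -1$ on $L_r$. Then $-r^{-1}$ has eigenvalues $-1$ on $H_r$ and $-\zeta^{-1}\neq 1$ on $L_r$, so it fixes no line and is not a reflection. Therefore the atoms are exactly the elements of $R_2$, and $s\mapsto -s$ is a fixed-point-free involution on $R_2$. It follows that $[1,z]_A=\{1,z\}\cup R_2$, with every element of $R_2$ covered by $z$. A bounded poset of rank $2$ is always a lattice: the meet of two distinct atoms is $1$ and their join is $z$. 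Every element of $W$ is balanced here because $A$ is stable under conjugation, or directly because $z$ is central, which gives $st=z\iff ts=z$. So the interval Garside theorem of Bessis (recalled in Section~\ref{sub_interval}) applies.

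For the group, I would follow the presentation argument of Section~\ref{sec_dih} word for word. Pair up $R_2$ into the $|R_2|/2$ pairs $\{s,-s\}$. The only relations of $M([1,z]_A)$ are $\underline{s}\cdot\underline{-s}=\underline{z}$ for $s\in R_2$; there are no relations among words of length $3$ or more since the rank is $2$. Choosing one representative $r_i$ in each pair and eliminating its partner via $\underline{-r_i}=r_i^{-1}\Delta=\Delta r_i^{-1}$ with $\Delta=\underline{z}$ gives the presentation $\langle r_1,\dots,r_{|R_2|/2},\Delta\mid r_i\Delta=\Delta r_i\rangle\cong\mathbb{Z}\times F_{|R_2|/2}$.

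Finally, for the listed examples I would only need to check that they contain order-$2$ reflections and have $-I_2$ in their center. For $G_{12}$, $G_{13}$ and $G_{22}$ this can be read off from tables such as~\cite{LT}. For $G(m,p,2)$ with $m$ even, the element $-I_2$ is a monomial matrix with determinant $1$, hence lies in the group, and the antidiagonal reflections are of order $2$. The main obstacle I expect is the atom analysis in the presence of reflections of order greater than $2$. The eigenvalue computation above should dispose of it, and it also shows that $s\neq -s$, so the pairing is genuinely fixed-point-free.
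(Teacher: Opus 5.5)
Your proposal is correct and follows the same route as the paper: you show $z=-I_2$, note that $t\mapsto zt$ is a fixed-point-free involution of $R_2$ while reflections of order greater than $2$ cannot lie below $z$, deduce $\ell_A(z)=2$ and $[1,z]_A=\{1,z\}\cup R_2$, and recover the dihedral presentation of Section~\ref{sec_dih}. Your eigenvalue computation, together with your remarks on balancedness and on rank-two bounded posets being lattices, spells out steps that the paper leaves implicit.
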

	
	\begin{proof}
		Since $W$ is irreducible, every central element consists of a scalar matrix. We thus have that $z= - I_2$. It follows that, letting $t\in R_2$, we have that $tz = zt$ is also a reflection, and it is distinct from $t$. If $t$ has order greater than $2$, then $tz$ cannot be a reflection. We thus have $\ell_A(z)=2$ and maximal chains in $[1,z]_A$ come by pairs: whenever $1 <_A t <_A tt'=z$ is a maximal chain (with $t\in R_2$), then $tt'=t't$ and $1 <_A t' <_A t't=z$ is also a maximal chain. We thus recover exactly the same interval group presentation as the one obtained for $I_2(|R_2|)$ in Section~\ref{sec_dih}. Note that under our assumptions $|R_2|$ is necessarily even, since the map $R_2 \rightarrow R_2, t \mapsto zt$ is involutive and without fixed points. 
	\end{proof}

	The examples given above provide situations where either the lattice property fails, or the interval Garside structure was already obtained in the real case. We finish this section by providing an example where the lattice property holds, and the obtained Garside structure is different from those obtained in the real case. 
	
	The examples obtained above suggest to consider a group where the chosen central element is of order $2$, and the rank of the group is greater than $2$. 
	
	\subsection{The complex reflection group $G(4,1,3)$}
	Let $W$ be the complex reflection group $G(4,1,3)$, consisting of monomial $3$ by $3$ matrices whose entries are $4$\textsuperscript{th} roots of unity. Let $A$ denote the set of reflections of $W$. A reflection in $W$ has eigenvalues $(1,1,\zeta)$, where $\zeta\in\{-1, \pm i\}$ and thus has order $2$ or $4$.
	
	We consider the element $z= - I_3\in W$. Since each reflection fixes a hyperplane, and the intersection of two hyperplanes in $\mathbb{C}^3$ is nonzero, every product of at most two reflections has a nonzero fixed vector. As $\operatorname{Fix}(z)=\{0\}$, we obtain $\ell_A(z)\geq 3$. On the other hand, since $z$ belongs to the subgroup $G(2,1,3)$ (which is the Weyl group of type $B_3$), where it is the longest element, it admits a reduced factorization into three reflections of $W_{B_3}$. As every reflection of $W_{B_3}$ is also a reflection of $G(4,1,3)$, we obtain $\ell_A(z)\le3$. Therefore,  $\ell_A(z)=3$.
	
	\begin{rmq}
		In a finite Coxeter group $W$, the reflection length of an element $w$ is equal the codimension of its space of fixed points $V^w$ (see~\cite[Section 2]{Carter}). As observed by Shi~\cite[Remark 2.3 (1)]{Shi} this also holds if $W$ is the complex reflection group $G(m,1,n)$. We shall not use this result here but it may be helpful to have it in mind. See also~\cite{Lewis_Wang} for a characterization of the elements $w$ in a complex reflection group $G(m,p,n)$ for which the reflection length is equal to the codimension of the fixed space. 
	\end{rmq}

	Since $\ell_A(z)=3$, every reduced reflection factorization of $z$ in $G(2,1,3)$ is also reduced with respect to $A$. Therefore $[1,z]_T\subseteq [1,z]_A$, where $T$ denotes the set of reflections of $G(2,1,3)$. We shall prove that $[1,z]_A$ is a lattice.
	
	We first show that no reflection in $W$ of order $4$ can lie in the interval $[1,z]_A$. Let $t$ be such a reflection. Then its eigenvalues are $(1,1,\mathbf{i})$ or $(1,1,-\mathbf{i})$. We consider the element $t^{-1}z=-t^{-1}$. Its eigenvalues are $(-1,-1,\mathbf{i})$ or $(-1,-1,-\mathbf{i})$. Therefore, $t^{-1}z$ has no nonzero fixed vector and, by the same argument as the one given above for $z$, it has reflection length at least $3$. However, is $t\leq_A z$, then $\ell_A(t^{-1} z)=\ell_A(z)-\ell_A(t)=3-1=2$, which is a contradiction.
	Hence every reflection in $[1,z]_A$ has order $2$. Therefore, for the study of the interval $[1,z]_A$, we may replace $A$ by the set $R$ of $2$-reflections. There are $15$ such reflections, listed below:
	\[
	\begin{array}{ccccc}
		\begin{pmatrix}
			-1&0&0\\
			0&1&0\\
			0&0&1
		\end{pmatrix}
		&
		\begin{pmatrix}
			1&0&0\\
			0&-1&0\\
			0&0&1
		\end{pmatrix}
		&
		\begin{pmatrix}
			1&0&0\\
			0&1&0\\
			0&0&-1
		\end{pmatrix}
		&
		\begin{pmatrix}
			0&1&0\\
			1&0&0\\
			0&0&1
		\end{pmatrix}
		&
		\begin{pmatrix}
			0&-\mathbf{i}&0\\
			\mathbf{i}&0&0\\
			0&0&1
		\end{pmatrix}
		\\[1.5em]
		
		\begin{pmatrix}
			0&-1&0\\
			-1&0&0\\
			0&0&1
		\end{pmatrix}
		&
		\begin{pmatrix}
			0&\mathbf{i}&0\\
			-\mathbf{i}&0&0\\
			0&0&1
		\end{pmatrix}
		&
		\begin{pmatrix}
			0&0&1\\
			0&1&0\\
			1&0&0
		\end{pmatrix}
		&
		\begin{pmatrix}
			0&0&-\mathbf{i}\\
			0&1&0\\
			\mathbf{i}&0&0
		\end{pmatrix}
		&
		\begin{pmatrix}
			0&0&-1\\
			0&1&0\\
			-1&0&0
		\end{pmatrix}
		\\[1.5em]
		
		\begin{pmatrix}
			0&0&\mathbf{i}\\
			0&1&0\\
			-\mathbf{i}&0&0
		\end{pmatrix}
		&
		\begin{pmatrix}
			1&0&0\\
			0&0&1\\
			0&1&0
		\end{pmatrix}
		&
		\begin{pmatrix}
			1&0&0\\
			0&0&-\mathbf{i}\\
			0&\mathbf{i}&0
		\end{pmatrix}
		&
		\begin{pmatrix}
			1&0&0\\
			0&0&-1\\
			0&-1&0
		\end{pmatrix}
		&
		\begin{pmatrix}
			1&0&0\\
			0&0&\mathbf{i}\\
			0&-\mathbf{i}&0
		\end{pmatrix}
	\end{array}
	\]

	We label the reflections of $R$ consistently with the notation used in Section~\ref{sub_bn} for type $B_3$. Thus, the reflections belonging to $B_3$ are denoted by $[i]$ ($1\leq i\leq 3$), $t_{i,j}$ and $r_{i,j}$ ($1\leq i<j\leq 3$). Let $(e_1,e_2,e_3)$ denote the canonical basis of $\mathbb{C}^3$. The reflection $[i]$ sends $e_i$ to $-e_i$ and fixes the remaining basis vectors, while for $u\in{t,r}$, the reflection $u_{i,j}$ sends $e_i$ to $\pm e_j$, $e_j$ to $\pm e_i$, and fixes $e_k$ for $k\neq i,j$, where the sign is $+$ if $u=t$ and $-$ if $u=r$.
	
	We denote the remaining reflections of $R$ by $v_{i,j}$ and $w_{i,j}$ ($1\leq i<j\leq 3$), according to their action on the canonical basis. They are defined by $v_{i,j}(e_i)=\mathbf{i}e_j$, $v_{i,j}(e_j)=-\mathbf{i}e_i$, and $v_{i,j}(e_k)=e_k$ for $k\neq i,j$; the definition of $w_{i,j}$ is obtained by replacing $\mathbf{i}$ with $-\mathbf{i}$. By abuse of notation, we use the same symbol for a reflection and for the corresponding endomorphism (or its matrix in the canonical basis).

	Assume now that $t_1t_2t_3=z$ is an $R$-reduced expression of $z$. Since $t_3$ is an involution and $z$ is central and involutive, we have that $t_1t_2=zt_3$ is an involution. Hence, since $t_1$ and $t_2$ have order two, we get $t_1t_2=t_2t_1$. We thus have $z=t_1 t_2 t_3 = t_2 t_1 t_3$. Since $z$ is central, we also have $z=t_1t_2t_3=t_3t_1t_2=t_2t_3t_1$, and the same argument applies to each of the other pairs of factors. It follows that, as in the real case, every $R$-reduced expression of $z$ consists of three pairwise commuting reflections.
	
	A straightforward computation shows that two reflections $t_1,t_2\in R$ commute if and only if ${t_1,t_2}$ is one of the following pairs:
	\begin{itemize}
		\item ${[i],[j]}$, where $i\neq j$;
		\item ${[i],x_{j,k}}$, where ${i,j,k}={1,2,3}$, $j<k$, and $x\in\{t,r,v,w\}$;
		\item ${t_{i,j},r_{i,j}}$, where $i<j$;
		\item ${v_{i,j},w_{i,j}}$, where $i<j$.
	\end{itemize}
	Multiplying the commuting pairs listed above yields all candidates for the elements of reflection length $2$ in $[1,z]_R$. These are the elements $[i][j]$ ($3$ elements) and $[i]x_{j,k}$, where ${i,j,k}={1,2,3}$, $j<k$, and $x\in\{t,r,v,w\}$ ($12$ elements). Thus we obtain $15$ elements.
	This list is exactly the list of elements of reflection length two in $[1,z]_R$. Indeed, if $y\in[1,z]_R$ has reflection length $2$, then $\ell_R(y^{-1}z)=1$, so $y^{-1}z=t$ for some $t\in R$. Since $z$ is central and $t$ is involutive, we have $y=zt=tz$. Hence the elements of reflection length $2$ in $[1,z]_R$ are precisely the elements $tz$, with $t\in R$. Since $|R|=15$, the above list contains all elements of reflection length $2$ in the interval. Note that $|[1,z]_R|=1+15+15+1=32$.

	\begin{lemma}
		The poset $[1,z]_A = [1,z]_R$ is a lattice. 
	\end{lemma}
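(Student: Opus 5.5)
The poset $[1,z]_R$ is graded of rank $3$: by the discussion preceding the statement, its elements are $1$, the $15$ atoms (the reflections of $R$), the $15$ coatoms (the elements $tz=zt$ with $t\in R$), and the top $z$. I will use the standard criterion for such posets. A finite bounded poset of rank $3$ is a lattice if and only if no two distinct atoms lie below two distinct coatoms, i.e.\ there is no ``bowtie'' between ranks $1$ and $2$.

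This criterion is elementary to justify here. Joins of two elements that are comparable, or that involve $1$ or $z$, are obvious. Two distinct coatoms have join $z$. For two distinct atoms $a,b$, the common upper bounds are $z$ together with the coatoms above both; hence a join exists if and only if there is at most one such coatom. The join is that coatom if it exists, and $z$ otherwise. An atom and a coatom not above it have join $z$. Existence of meets then follows, since the poset is finite and bounded. So the whole proof reduces to computing, for each coatom $y$, its set $\mathrm{At}(y)=\{s\in R \mid s\leq_R y\}$, and checking that $|\mathrm{At}(y)\cap \mathrm{At}(y')|\leq 1$ whenever $y\neq y'$.

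\textbf{Computing $\mathrm{At}(y)$.} For a coatom $y$ of reflection length $2$, an atom $s$ satisfies $s\leq_R y$ if and only if $y=ss'$ for some $s'\in R$. Since $y$ is an involution, such $s,s'$ commute. So $\mathrm{At}(y)$ is the union of the commuting pairs $\{s,s'\}$ from the list above whose product equals $y$. I will compute these products directly on the canonical basis.
\begin{itemize}
\item For $y=[i][j]$, the pairs are $\{[i],[j]\}$, $\{t_{i,j},r_{i,j}\}$ and $\{v_{i,j},w_{i,j}\}$. For instance $t_{i,j}r_{i,j}$ and $v_{i,j}w_{i,j}$ both send $e_i\mapsto -e_i$ and $e_j\mapsto -e_j$, and fix $e_k$. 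Hence $\mathrm{At}([i][j])=\{[i],[j],t_{i,j},r_{i,j},v_{i,j},w_{i,j}\}$.
\item For $y=[i]x_{j,k}$, only the pair $\{[i],x_{j,k}\}$ has this product. Any other commuting pair gives either a diagonal matrix, or a matrix with a different pattern of nonzero entries or different entries. Hence $\mathrm{At}([i]x_{j,k})=\{[i],x_{j,k}\}$.
\end{itemize}

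\textbf{Checking pairwise intersections.} I then go through the types of pairs of distinct coatoms, using indices with $\{i,j,k\}=\{1,2,3\}$ where relevant.
\begin{itemize}
\item $\mathrm{At}([i][j])\cap\mathrm{At}([i][k])=\{[i]\}$, because the paired reflections involved carry the different index pairs $\{i,j\}$ and $\{i,k\}$.
\item $\mathrm{At}([i][j])\cap\mathrm{At}([k]x_{i,j})=\{x_{i,j}\}$.
\item $\mathrm{At}([i][j])\cap\mathrm{At}([i]x_{j,k})=\{[i]\}$.
\item $\mathrm{At}([i]x_{j,k})\cap\mathrm{At}([i]y_{j,k})=\{[i]\}$ for $x\neq y$.
\item $\mathrm{At}([i]x_{j,k})\cap\mathrm{At}([j]y_{i,k})=\emptyset$.
\end{itemize}
In all cases the intersection has at most one element, so there is no bowtie and $[1,z]_R$ is a lattice. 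It equals $[1,z]_A$ because no reflection of order $4$ lies below $z$, as shown above.

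I expect the main obstacle to be the completeness claim in the computation of $\mathrm{At}([i]x_{j,k})$, i.e.\ ruling out that some other commuting pair has product $[i]x_{j,k}$. I will handle it by comparing the product matrices of the finitely many commuting pairs listed before the statement. Their products are pairwise distinct: the $15$ commuting pairs give exactly the $15$ coatoms, and only the pairs with product $[i][j]$ coincide (three pairs for each of the three elements $[i][j]$). This already accounts for all the pairs, so the description of the atom sets is complete. As a consistency check, each coatom $y=tz$ satisfies $y\,s=zts$ for $s\leq_R y$, which must be a reflection.
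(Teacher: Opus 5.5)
Your proof is correct and follows the same route as the paper: you reduce to excluding a bowtie between ranks $1$ and $2$ and then check this case by case. The only difference is organizational (you intersect the atom sets of pairs of coatoms, whereas the paper counts the rank-$2$ upper bounds of pairs of atoms), plus a small slip: there are $21$ commuting pairs, not $15$ ($9$ giving the three elements $[i][j]$ and $12$ giving the elements $[i]x_{j,k}$), which does not affect the argument.
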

	\begin{proof}
		Since the interval $[1,z]_R$ has a bottom element of reflection lenght $0$ and a top element of reflection length $3$, the only possible obstruction to the lattice property is a bowtie between elements of reflection length $1$ and elements of reflection length $2$, that is, a quadruple $(a,b,x,y)$ with $a\neq b$, $x\neq y$, $\ell_R(a)=\ell_R(b)=1$, $\ell_R(x)=\ell_R(y)=2$, $a\leq_R x$, $a\leq_R y$, $b\leq_R x$, and $b\leq_R y$.
		
		Let $a,b\in R$. If $a$ and $b$ both belong to one of the sets ${[i],[j],t_{i,j},r_{i,j},v_{i,j},w_{i,j}}$, where $i<j$, then there is a unique element of reflection length $2$ above both of them, namely $[i][j]$.
		Otherwise, $a$ and $b$ do not belong to a common set of this form, or equivalently, they do not fix a common vector of the canonical basis. Then either $\{a,b\}=\{[i],x_{j,k}\}$, where $\{i,j,k\}=\{1,2,3\}$, $j<k$, and $x\in\{t,r,v,w\}$, or $\{a,b\}=\{x_{i,j},y_{k,\ell}\}$, where $\{i,j,k,\ell\}=\{1,2,3\}$, $i<j$, $k<\ell$, and $x,y\in\{t,r,v,w\}$. In the first case, there is again a unique element of reflection length $2$ above both $a$ and $b$, namely $[i]x_{j,k}$. In the second case, there is no element of reflection length $2$ above both of them, and their least upper bound is $z$.
		
		Thus no bowtie can occur and, therefore, the poset $[1,z]_A=[1,z]_R$ is a lattice.
	\end{proof}
	
	\begin{figure}
		\centering
		\begin{tikzpicture}[scale=0.8, every node/.style={scale=0.8}]
			
			\node (one) at (0,-1) {$1$};
			\node (z) at (0,11.5) {$z$};
			
			\node (b1) at (-8.4,2.5) {$[1]$};
			\node (b2) at (0,2.5) {$[2]$};
			\node (b3) at (8.4,2.5) {$[3]$};
			
			\node (t12) at (-7.2,2.5) {$t_{1,2}$};
			\node (r12) at (-6,2.5) {$r_{1,2}$};
			\node (v12) at (-4.8,2.5) {$v_{1,2}$};
			\node (w12) at (-2.4,2.5) {$w_{1,2}$};
			
			\node (t13) at (-3.6,2.5) {$t_{1,3}$};
			\node (r13) at (-1.2,2.5) {$r_{1,3}$};
			\node (v13) at (1.2,2.5) {$v_{1,3}$};
			\node (w13) at (3.6,2.5) {$w_{1,3}$};
			
			\node (t23) at (2.4,2.5) {$t_{2,3}$};
			\node (r23) at (4.8,2.5) {$r_{2,3}$};
			\node (v23) at (6,2.5) {$v_{2,3}$};
			\node (w23) at (7.2,2.5) {$w_{2,3}$};
			
			\node (b12) at (-8.4,8) {$[1][2]$};
			\node (b13) at (0,8) {$[1][3]$};
			\node (b23) at (8.4,8) {$[2][3]$};
			
			\node (b1t23) at (-7.2,8) {$[1]t_{2,3}$};
			\node (b1r23) at (-6,8) {$[1]r_{2,3}$};
			\node (b1v23) at (-4.8,8) {$[1]v_{2,3}$};
			\node (b1w23) at (-2.4,8) {$[1]w_{2,3}$};
			
			\node (b2t13) at (-3.6,8) {$[2]t_{1,3}$};
			\node (b2r13) at (-1.2,8) {$[2]r_{1,3}$};
			\node (b2v13) at (1.2,8) {$[2]v_{1,3}$};
			\node (b2w13) at (3.6,8) {$[2]w_{1,3}$};
			
			\node (b3t12) at (2.4,8) {$[3]t_{1,2}$};
			\node (b3r12) at (4.8,8) {$[3]r_{1,2}$};
			\node (b3v12) at (6,8) {$[3]v_{1,2}$};
			\node (b3w12) at (7.2,8) {$[3]w_{1,2}$};
			
			\foreach \x in {b1,b2,b3,t12,r12,v12,w12,t13,r13,v13,w13,t23,r23,v23,w23}
			\draw (one) -- (\x);
			
			\foreach \x in {b12,b13,b23,b1t23,b1r23,b1v23,b1w23,b2t13,b2r13,b2v13,b2w13,b3t12,b3r12,b3v12,b3w12}
			\draw (\x) -- (z);
			
			\foreach \x in {b1,b2,t12,r12,v12,w12}
			\draw (\x) -- (b12);
			
			\foreach \x in {b1,b3,t13,r13,v13,w13}
			\draw (\x) -- (b13);
			
			\foreach \x in {b2,b3,t23,r23,v23,w23}
			\draw (\x) -- (b23);
			
			\draw (b1) -- (b1t23);
			\draw (t23) -- (b1t23);
			\draw (b1) -- (b1r23);
			\draw (r23) -- (b1r23);
			\draw (b1) -- (b1v23);
			\draw (v23) -- (b1v23);
			\draw (b1) -- (b1w23);
			\draw (w23) -- (b1w23);
			
			\draw (b2) -- (b2t13);
			\draw (t13) -- (b2t13);
			\draw (b2) -- (b2r13);
			\draw (r13) -- (b2r13);
			\draw (b2) -- (b2v13);
			\draw (v13) -- (b2v13);
			\draw (b2) -- (b2w13);
			\draw (w13) -- (b2w13);
			
			\draw (b3) -- (b3t12);
			\draw (t12) -- (b3t12);
			\draw (b3) -- (b3r12);
			\draw (r12) -- (b3r12);
			\draw (b3) -- (b3v12);
			\draw (v12) -- (b3v12);
			\draw (b3) -- (b3w12);
			\draw (w12) -- (b3w12);
			
		\end{tikzpicture}
		\caption{The lattice $[1,z]_R$, where $z=-I_3$, in $G(4,1,3)$.}
	\end{figure}

	To identify the attached Garside group, we first list the relations arising from involutions of reflection length $2$. They are given by \begin{align*} [i] [j] &= [j] [i] = t_{i,j} r_{i,j} = r_{i,j} t_{i,j} = v_{i,j} w_{i,j}= w_{i,j} v_{i,j}, \ (i<j),\\
		[i] x_{j,k} &= x_{j,k} [i], \ \{i,j,k\}=\{1,2,3\}, j < k, x\in \{t,r,v,w\}.
	\end{align*}
	As for type $B_n$, we show that $M([1,z]_R)$ has a presentation with generators a copy of the set of reflections, and the above relations. It only remains to show that no new relations come from the element $z$, that is, that every two $R$-reduced expressions of $z$ can be related only by relations coming from involutions of reflection length $2$. This follows from the fact that, up to commutation of the letters, an $R$-reduced expression of $z$ is either given by $[1][2][3]$ or something of the form $[i] x_{j,k} y_{j,k}$ with $\{x,y\}= \{t,r\}$ or $\{x,y\} = \{ v,w\}$. But the latter can be related to the "canonical" reduced expression $[1][2][3]$ by using the relation $x_{j,k} y_{j,k} = [j][k]$ and commutation of adjacent factors, which all arise from involutions of reflection length $2$. 
	
	We thus have that $M([1,z]_R)$ has a presentation with generators $\underline{[i]}$, $\underline{x_{i,j}}$ ($i<j$, $x\in \{t,r,v,w\}$) and relations 
	\begin{align*}  \underline{[i]} \cdot \underline{[j]} = \underline{[j]} \cdot \underline{[i]} = \underline{t_{i,j}} \cdot \underline{r_{i,j}} = \underline{r_{i,j}} \cdot \underline{t_{i,j}} = \underline{v_{i,j}} \cdot \underline{w_{i,j}}=\underline{w_{i,j}} \cdot \underline{v_{i,j}}, & \quad \text{if } i < j, \\ \underline{[i]} \cdot \underline{t_{j,k}} = \underline{t_{j,k}} \cdot \underline{[i]}, \underline{[i]} \cdot \underline{r_{j,k}} = \underline{r_{j,k}} \cdot \underline{[i]}, & \\ \underline{[i]} \cdot \underline{v_{j,k}} = \underline{v_{j,k}} \cdot \underline{[i]}, \underline{[i]} \cdot \underline{w_{j,k}} = \underline{w_{j,k}} \cdot \underline{[i]} & \quad \text{if } |\{i,j,k\}| = 3, \ j < k. \end{align*}
	
	As for type $B_n$, in the corresponding Garside group $G([1,z]_R)$ we can obtain a presentation with a smaller number of generators by adding a central generator and removing some of the generators from $M([1,z]_R)$. Set $\Delta := \underline{[1]} \cdot \underline{[2]} \cdot \underline{[3]}$. We can then remove $[3]$ and $\underline{r_{i,j}}$ (as in type $B_n$) and $\underline{w_{i,j}}$, since $[3] = [2]^{-1} \cdot [1]^{-1} \cdot \Delta$, $\underline{r_{i,j}}= \underline{[i]} \cdot \underline{[j]} \cdot \underline{t_{i,j}}^{-1}$, $\underline{w_{i,j}}= \underline{[i]} \cdot \underline{[j]} \cdot \underline{v_{i,j}}^{-1}$. This yields a presentation which simplifies to
	$$ \mathbb{Z} \times \left\langle\, \begin{array}{l} [1], [2], \\ t_{i,j}, \quad 1 \leq i < j \leq 3, \\ v_{i,j}, \quad 1 \leq i < j \leq 3, \end{array} \ \middle| \ \begin{array}{ll} \underline{[1]} \cdot \underline{[2]} = \underline{[2]} \cdot \underline{[1]},   & \\[4pt] \underline{[1]} \cdot \underline{[2]} \cdot\underline{x_{1,2}} =\underline{x_{1,2}} \cdot \underline{[1]} \cdot \underline{[2]} & \text{if } x\in \{t, v\}, \\[4pt] \underline{[i]} \cdot \underline{x_{j,k}} = \underline{x_{j,k}} \cdot \underline{[i]}  & \text{if } |\{i,j,k\}| = 3, \ j < k, x\in \{t, v\} \end{array} \,\right\rangle.$$
	To compare with the interval group $G'$ obtained in type $B_3$ (see Example~\ref{ex_bn}), we rewrite this presentation. Setting $a=\underline{[1]}$, $b=\underline{[2]}$, $d=\underline{t_{1,2}}$, $c=\underline{t_{1,3}}$, $e=\underline{t_{2,3}}$, $d'=\underline{v_{1,2}}$, $c'=\underline{v_{1,3}}$, $e'=\underline{v_{2,3}}$, the presentation above becomes 	\[\mathbb{Z} \times \Biggl\langle 
	\begin{array}{l|clll}
		& ab=ba, \\
		a,b,c,c', d,d', e, e' & cb=bc, & c'b=bc', & abd=dab, & abd'=d'ab, \\
		& ae=ea,  & ae'=e'a                                            
	\end{array}
	\Biggr\rangle.\]
	Denoting this group $G$ we see that $G'$ embeds into $G$: the natural map $G' \longrightarrow G$ has a section obtained by setting $d'=c'=e'=1$.

\end{document}